\newtheorem{theorem}{Theorem}[section]
\newtheorem{lemma}{Lemma}[section]
\newtheorem{remark}{Remark}[section]
\newtheorem{corollary}{Corollary}[section]
\numberwithin{equation}{section}
\begin{document}
	
\title{Singular values inequalities for matrix means}
\author{Mohammed Sababheh, Shigeru Furuichi, Shiva Sheybani and Hamid Reza Moradi}
\subjclass[2010]{Primary 47A63, Secondary 47A64, 47B15, 15A45.}
\keywords{positive matrices, matrix means, singular values.}

\begin{abstract}
In this article, we show multiple inequalities for the singular values of the difference of matrix means. The obtained results refine and complement some well established results in the literature. Although we target singular values inequalities, we will show several matrix means inequalities, as well.
\end{abstract}
\maketitle
\pagestyle{myheadings}
\markboth{\centerline {M. Sababheh, S. Furuichi, S. Sheybani and H. R. Moradi}}
{\centerline {Singular values inequalities for matrix means}}
\bigskip
\bigskip
\section{Introduction}
Let $\mathcal{M}_n^+$ denote the cone of positive $n\times n$ complex matrices. That is, $A\in\mathcal{M}_n^+$ if and only if $\left<Ax,x\right> >0 $ for all non zero vectors $x\in\mathbb{C}^n.$\\
Inequalities governing positive matrices have been in the center of numerous researchers' attention. Among the most well studied inequalities for positive matrices are those inequalities controlling matrix means.\\
Recall that when $A,B\in\mathcal{M}_n^+$, the weighted arithmetic, geometric and harmonic means of $A,B$ are defined respectively by
$$A\nabla_v B=(1-v)A+v B, A\sharp_v B=A^{\frac{1}{2}}\left(A^{-\frac{1}{2}}BA^{-\frac{1}{2}}\right)^{v}A^{\frac{1}{2}}, A!_vB=((1-v)A^{-1}+vB^{-1})^{-1},$$ for $0\leq v\leq 1.$
When $v=\frac{1}{2},$ we use the notations $\nabla, \sharp$ and $!$ instead of $\nabla_{\frac{1}{2}}, \sharp_{\frac{1}{2}}$ and $!_{\frac{1}{2}}$, respectively.

The notion of matrix means is too wide, but those three means happen to appear most frequently. It is well known that when $A,B\in\mathcal{M}_n^+$, one has the basic inequality 
\begin{equation}\label{amgm_intro}
A!_vB\leq A\sharp_vB\leq A\nabla_vB, 0\leq v\leq 1.
\end{equation}
Also, it is well known that when $A,B\in\mathcal{M}_n^+$ are such that $A\leq B$ then 
\begin{equation}\label{order_intro}
A\leq A!_vB, A\sharp_v B,A\nabla_vB\leq B.
\end{equation}
Although \eqref{amgm_intro} presents some relations among those means, it is of great interest to find better and sharper bounds. Further, computations of $A\sharp_vB$ is not as easy as $A\nabla_vB$ or $A!_vB.$ This urges the search for some relations that could be easier than just stating \eqref{amgm_intro}. We refer the reader to \cite{6,5,4} for further investigation of \eqref{amgm_intro}.

In \cite{1,2}, some singular values inequalities were given to describe the difference between such means. For example, it was shown in \cite{1} that if $A,B\in\mathcal{M}_n^+$ are such that $B\leq A$, then
\begin{equation}\label{gumus_1}
{{s}_{j}}\left( A\nabla B-A\sharp B \right)\le \frac{1}{8}{{s}_{j}}\left( {{B}^{-\frac{1}{2}}}{{\left( A-B \right)}^{2}}{{B}^{-\frac{1}{2}}} \right), j=1,2,\cdots,n,
\end{equation}
and 
\begin{equation}\label{gumus_2}
{{s}_{j}}\left( A\nabla B-A\sharp B \right)\geq \frac{1}{8}{{s}_{j}}\left( {{A}^{-\frac{1}{2}}}{{\left( A-B \right)}^{2}}{{A}^{-\frac{1}{2}}} \right), j=1,2,\cdots,n,
\end{equation}
where $s_j(X)$ represents the $j^{{\text{th}}}$ singular value of the matrix $X$, when all singular values of $X$ are arranged in a decreasing order, counting multiplicities.\\
These inequalities simulate the scalar inequalities
\begin{equation}\label{scalar_intro}
\frac{1}{8}\frac{(a-b)^2}{a}\leq \frac{a+b}{2}-\sqrt{ab}\leq \frac{1}{8}\frac{(a-b)^2}{b},
\end{equation}
 valid for the positive numbers $a\geq b,$ \cite{3}.

Following the same theme, it has been shown in \cite[Corollary 1]{2} that if $A,B\in\mathcal{M}_ n^+$ are  such that $B\le A$, then
\begin{equation}\label{hirz_intro}
\frac{v\left( 1-v \right)}{2}{{s }_{j}}\left( {{A}^{-\frac{1}{2}}}{{\left( A-B \right)}^{2}}{{A}^{-\frac{1}{2}}} \right)\leq {{s }_{j}}\left( A{{\nabla }_{v}}B-A{{\sharp}_{v}}B \right)\le \frac{v\left( 1-v \right)}{2}{{s }_{j}}\left( {{B}^{-\frac{1}{2}}}{{\left( A-B \right)}^{2}}{{B}^{-\frac{1}{2}}} \right)
\end{equation}
for any $0\le v\le 1$. Notice that substituting $v=\frac{1}{2}$ in \eqref{hirz_intro} implies \eqref{gumus_1} and \eqref{gumus_2}.

The main goal in this article is to present sharper and related inequalities for the singular values of the difference $A{{\nabla }_{v}}B-A{{\sharp}_{v}}B$. Adding to this, we present singular values inequalities for the differences $A\sharp_v B-A!_v B$ and $A\nabla B-A!B.$ We will notice how these different differences have similar bounds.
\section{Main Results}
In this section, we present our results, in different sections based on the means we are dealing with.
\subsection{ Arithmetic-Geometric mean inequalities}
We begin with the following matrix version of \eqref{scalar_intro}, without imposing any conditions on the order between $A$ and $B.$
\begin{theorem}\label{thm1}
Let $A,B\in\mathcal{M}_n^+$. Then 
\begin{equation}\label{01}
\begin{aligned}
\frac{1}{8}\left( A-B \right){{\left( A\nabla B \right)}^{-1}}\left( A-B \right)&\leq   A\nabla B-A\sharp B  & \le \frac{1}{8}\left( A-B \right){{\left( A\sharp B \right)}^{-1}}\left( A-B \right).  
\end{aligned}
\end{equation}
\end{theorem}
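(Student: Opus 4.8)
The plan is to reduce the matrix inequality to a scalar one via a congruence transformation, since both sides of \eqref{01} are of the form $X^{*}f(T)X$ for a suitable operator monotone or operator convex function $f$. First I would set $A>0$ and, using a continuity argument, assume $A$ is invertible; write $T=A^{-\frac12}BA^{-\frac12}>0$ and apply the congruence $X\mapsto A^{-\frac12}XA^{-\frac12}$ to every term in \eqref{01}. Since $A\nabla B=A^{\frac12}(I\nabla T)A^{\frac12}$, $A\sharp B=A^{\frac12}(I\sharp T)A^{\frac12}=A^{\frac12}T^{\frac12}A^{\frac12}$, and $A-B=A^{\frac12}(I-T)A^{\frac12}$, the whole chain \eqref{01} becomes, after cancelling the outer $A^{\frac12}$ factors,
\begin{equation}\label{reduced}
\tfrac18(I-T)\Bigl(\tfrac{I+T}{2}\Bigr)^{-1}(I-T)\ \le\ \tfrac{I+T}{2}-T^{\frac12}\ \le\ \tfrac18(I-T)\bigl(T^{\frac12}\bigr)^{-1}(I-T).
\end{equation}
Because $T$ is positive definite, $I-T$ and all functions of $T$ appearing here commute, so \eqref{reduced} is equivalent to the scalar inequality
\[
\tfrac18\,\frac{(1-t)^2}{\frac{1+t}{2}}\ \le\ \frac{1+t}{2}-\sqrt{t}\ \le\ \tfrac18\,\frac{(1-t)^2}{\sqrt{t}},\qquad t>0,
\]
holding at the eigenvalues $t$ of $T$, and the matrix version follows by the spectral theorem.

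Next I would verify the scalar inequality. The right-hand inequality is exactly the second inequality in \eqref{scalar_intro} with $a=\max\{1,t\}$, $b=\min\{1,t\}$ — note that the expression $\frac{1+t}{2}-\sqrt t$ and $\frac{(1-t)^2}{\sqrt t}$ are both symmetric under $t\leftrightarrow 1/t$ in the appropriate normalized sense, so no order hypothesis between $A$ and $B$ is needed; concretely, setting $u=\sqrt t$ one checks $\frac{1+u^2}{2}-u=\frac{(1-u)^2}{2}$ and $\frac18\frac{(1-u^2)^2}{u}=\frac{(1-u)^2(1+u)^2}{8u}$, so the right inequality reduces to $4u\le(1+u)^2$, i.e. $(1-u)^2\ge0$. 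For the left inequality one similarly reduces to showing $\frac{(1-u)^2}{2}\ge\frac18\frac{(1-u^2)^2}{(1+u^2)/2}=\frac{(1-u)^2(1+u)^2}{4(1+u^2)}$, i.e. $2(1+u^2)\ge(1+u)^2$, which is again $(1-u)^2\ge0$. So both scalar inequalities hold for all $t>0$ with equality iff $t=1$.

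The only genuine subtlety — and the step I expect to need the most care — is the commutativity claim used to pass from \eqref{reduced} to the scalar inequality: one must observe that after the congruence reduction every matrix in sight is a function of the single positive operator $T$, hence they all commute and mutually diagonalize, which is what legitimizes applying a scalar inequality entrywise on the spectrum. Finally I would remove the invertibility assumption on $A$ by replacing $A$ with $A+\varepsilon I$ and $B$ with $B+\varepsilon I$ (both in $\mathcal M_n^+$ already, so this is just for safety) and letting $\varepsilon\to0^+$, using continuity of all the operator means and of the involved inverses on $\mathcal M_n^+$. This completes the proof without any order assumption between $A$ and $B$, which is the advertised improvement over \eqref{gumus_1}–\eqref{gumus_2}.
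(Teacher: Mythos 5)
Your proof is correct, and it reaches the conclusion by a somewhat different route than the paper. Both arguments start from the same congruence reduction to $T=A^{-\frac{1}{2}}BA^{-\frac{1}{2}}$, but the paper's key scalar ingredient is the exact identity
\[
\frac{1+x}{2}-\sqrt{x}=\frac{1}{8}\left( 1-x \right)^2\left( \frac{\frac{1+x}{2}+\sqrt{x}}{2} \right)^{-1},
\]
which it lifts by functional calculus to the matrix identity \eqref{identity_1},
\[
A\nabla B-A\sharp B=\frac{1}{8}\left( A-B \right)\left( \frac{A\nabla B+A\sharp B}{2} \right)^{-1}\left( A-B \right),
\]
and then derives both bounds from the operator AM--GM inequality $A\sharp B\le \frac{A\nabla B+A\sharp B}{2}\le A\nabla B$ combined with antitonicity of the inverse and congruence by the Hermitian matrix $A-B$. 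You instead prove the two scalar inequalities directly (each collapsing to $(1-u)^2\ge 0$ after the substitution $u=\sqrt{t}$) and transfer the entire chain at once, using the observation that after the congruence every matrix involved is a function of the single operator $T$, so the chain is a scalar inequality on the spectrum. Your route is more elementary---it needs neither the operator AM--GM inequality nor operator monotonicity of $t\mapsto t^{-1}$---but it does not produce the exact identity, which the paper records separately and reuses (for instance in the corollary bounding $\frac{1}{8}(A-B)(A\nabla B-A\sharp B)^{-1}(A-B)$ between $A\sharp B$ and $A\nabla B$). Two small remarks: your aside that the right-hand scalar inequality ``is exactly'' the second inequality in \eqref{scalar_intro} with $a=\max\{1,t\}$, $b=\min\{1,t\}$ is inaccurate (that substitution puts $\min\{1,t\}$, not $\sqrt{t}$, in the denominator), but your subsequent direct computation makes the remark superfluous; and the closing $\varepsilon$-perturbation is unnecessary, since $\mathcal{M}_n^+$ as defined in the paper consists of positive definite, hence invertible, matrices.
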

\begin{proof}
For any $x\ge 0$, it can be easily seen that
	\[\frac{1+x}{2}-\sqrt{x}=\frac{1}{8}{{\left( 1-x \right)}^{2}}{{\left( \frac{\frac{1+x}{2}+\sqrt{x}}{2} \right)}^{-1}}.\]
 By applying functional calculus for the operator ${{A}^{-\frac{1}{2}}}B{{A}^{-\frac{1}{2}}},$ we infer that
	\[\begin{aligned}
  & \frac{I+{{A}^{-\frac{1}{2}}}B{{A}^{-\frac{1}{2}}}}{2}-{{\left( {{A}^{-\frac{1}{2}}}B{{A}^{-\frac{1}{2}}} \right)}^{\frac{1}{2}}} \\ 
 & =\frac{1}{8}\left( I-{{A}^{-\frac{1}{2}}}B{{A}^{-\frac{1}{2}}} \right){{\left( \frac{\frac{I+{{A}^{-\frac{1}{2}}}B{{A}^{-\frac{1}{2}}}}{2}+{{\left( {{A}^{-\frac{1}{2}}}B{{A}^{-\frac{1}{2}}} \right)}^{\frac{1}{2}}}}{2} \right)}^{-1}}\left( I-{{A}^{-\frac{1}{2}}}B{{A}^{-\frac{1}{2}}} \right). \\ 
\end{aligned}\]
Thus,
	\[\begin{aligned}
  & \frac{I+{{A}^{-\frac{1}{2}}}B{{A}^{-\frac{1}{2}}}}{2}-{{\left( {{A}^{-\frac{1}{2}}}B{{A}^{-\frac{1}{2}}} \right)}^{\frac{1}{2}}} \\ 
 & =\frac{1}{8}\left( I-{{A}^{-\frac{1}{2}}}B{{A}^{-\frac{1}{2}}} \right){{A}^{\frac{1}{2}}}{{A}^{-1}}{{A}^{\frac{1}{2}}}{{\left( \frac{\frac{I+{{A}^{-\frac{1}{2}}}B{{A}^{-\frac{1}{2}}}}{2}+{{\left( {{A}^{-\frac{1}{2}}}B{{A}^{-\frac{1}{2}}} \right)}^{\frac{1}{2}}}}{2} \right)}^{-1}}{{A}^{\frac{1}{2}}}{{A}^{-1}}{{A}^{\frac{1}{2}}}\left( I-{{A}^{-\frac{1}{2}}}B{{A}^{-\frac{1}{2}}} \right). \\ 
\end{aligned}\]
Multiplying both sides by ${{A}^{\frac{1}{2}}}$ implies,
\begin{equation}\label{5}
A\nabla B-A\sharp B=\frac{1}{8}\left( A-B \right){{\left( \frac{A\nabla B+A\sharp B}{2} \right)}^{-1}}\left( A-B \right).
\end{equation}
It follows from the matrix arithmetic--geometric mean inequality \eqref{amgm_intro} that
\begin{equation}\label{1}
\begin{aligned}
   A\nabla B-A\sharp B&=\frac{1}{8}\left( A-B \right){{\left( \frac{A\nabla B+A\sharp B}{2} \right)}^{-1}}\left( A-B \right) \\ 
 & \le \frac{1}{8}\left( A-B \right){{\left( A\sharp B \right)}^{-1}}\left( A-B \right).  
\end{aligned}
\end{equation}
This proves the second  inequality in \eqref{01}. To prove the first inequality in \eqref{01},  \eqref{5} and \eqref{amgm_intro} imply
\[\begin{aligned}
   A\nabla B-A\sharp B&=\frac{1}{8}\left( A-B \right){{\left( \frac{A\nabla B+A\sharp B}{2} \right)}^{-1}}\left( A-B \right) \\ 
 & \ge \frac{1}{8}\left( A-B \right){{\left( A\nabla B \right)}^{-1}}\left( A-B \right).  
\end{aligned}\]
This completes the proof.
\end{proof}
We emphasize the identity 
\begin{equation}\label{identity_1}
A\nabla B-A\sharp B=\frac{1}{8}\left( A-B \right){{\left( \frac{A\nabla B+A\sharp B}{2} \right)}^{-1}}\left( A-B \right),
\end{equation}
which we have just obtained in the proof of Theorem \ref{thm1}.
\begin{remark}
Although Theorem \ref{thm1} is stated for positive matrices of order $n\times n$, it is still valid for positive operators $A,B$ on an infinite dimensional separable Hilbert space. 
\end{remark}
As a consequence of Theorem \ref{thm1}, we have the following singular value inequality.
\begin{corollary}
Let $A,B\in\mathcal{M}_n^+$. Then
\begin{equation}\label{2}
{{s}_{j}}\left( A\nabla B-A\sharp B \right)\le \frac{1}{8}{{s}_{j}}\left( {{\left( A\sharp B \right)}^{-\frac{1}{2}}}{{\left( A-B \right)}^{2}}{{\left( A\sharp B \right)}^{-\frac{1}{2}}} \right)
\end{equation}
and
\begin{equation}\label{4}
{{s}_{j}}\left( A\nabla B-A\sharp B \right)\ge \frac{1}{8}{{s}_{j}}\left( {{\left( A\nabla B \right)}^{-\frac{1}{2}}}{{\left( A-B \right)}^{2}}{{\left( A\nabla B \right)}^{-\frac{1}{2}}} \right),
\end{equation}
for $j=1,2,\cdots,n$.
\end{corollary}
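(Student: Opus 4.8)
The plan is to obtain the corollary as a direct consequence of the operator inequalities \eqref{01}, by combining Weyl's monotonicity principle for eigenvalues with the elementary fact that $X^{*}X$ and $XX^{*}$ have the same singular values. First I would observe that, by the matrix arithmetic--geometric mean inequality \eqref{amgm_intro}, the middle term $A\nabla B-A\sharp B$ is positive semidefinite, and the two bounding terms $\tfrac{1}{8}(A-B)(A\nabla B)^{-1}(A-B)$ and $\tfrac{1}{8}(A-B)(A\sharp B)^{-1}(A-B)$ are manifestly positive semidefinite as well. Hence for each of these three matrices the singular values coincide with the eigenvalues listed in decreasing order.

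Next, recall that whenever $0\le X\le Y$ one has $s_j(X)=\lambda_j(X)\le\lambda_j(Y)=s_j(Y)$ for every $j$, which is immediate from the min--max characterisation of eigenvalues. Applying this to both inequalities in \eqref{01} yields
\[
\frac{1}{8}\,s_j\!\bigl((A-B)(A\nabla B)^{-1}(A-B)\bigr)\le s_j\bigl(A\nabla B-A\sharp B\bigr)\le \frac{1}{8}\,s_j\!\bigl((A-B)(A\sharp B)^{-1}(A-B)\bigr).
\]

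Finally, I would put the outer terms into the claimed symmetric form. Since $A-B$ is Hermitian and $A\sharp B$ is positive definite, set $T=(A\sharp B)^{-1/2}(A-B)$; then $T^{*}T=(A-B)(A\sharp B)^{-1}(A-B)$ while $TT^{*}=(A\sharp B)^{-1/2}(A-B)^{2}(A\sharp B)^{-1/2}$, so these two matrices have identical singular values, which gives \eqref{2}. Running the same computation with $(A\sharp B)^{-1/2}$ replaced by $(A\nabla B)^{-1/2}$ produces \eqref{4}. I do not expect any genuine obstacle here; the only points needing care are the positivity of $A\nabla B-A\sharp B$ (which legitimises both identifying singular values with eigenvalues and the monotonicity step) and the $T^{*}T\leftrightarrow TT^{*}$ identity for singular values.
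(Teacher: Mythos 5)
Your proposal is correct and follows essentially the same route as the paper: apply Weyl's monotonicity principle to the two operator inequalities in \eqref{01}, then use the identity $s_j(T^{*}T)=s_j(TT^{*})$ with $T=(A\sharp B)^{-1/2}(A-B)$ (resp. $T=(A\nabla B)^{-1/2}(A-B)$) to recast the bounds in symmetric form. Your explicit remark on the positivity of all three matrices is a slightly more careful justification of the monotonicity step, but the argument is the same.
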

\begin{proof}

From the second inequality in \eqref{01} and Weyl's monotonicity principle, we infer that
\begin{equation}\label{n1}
{{s}_{j}}\left( A\nabla B-A\sharp B \right)\le \frac{1}{8}{{s}_{j}}\left( \left( A-B \right){{\left( A\sharp B \right)}^{-1}}\left( A-B \right) \right)
\end{equation}
for $j=1,2,\cdots $.\\
 Since ${{s}_{j}}\left( {{X}^{*}}X \right)={{s}_{j}}\left( X{{X}^{*}} \right)$ for $j=1,2,\cdots $, it can be seen that
\[{{s}_{j}}\left( \left( A-B \right){{\left( A\sharp B \right)}^{-1}}\left( A-B \right) \right)={{s}_{j}}\left( {{\left( A\sharp B \right)}^{-\frac{1}{2}}}{{\left( A-B \right)}^{2}}{{\left( A\sharp B \right)}^{-\frac{1}{2}}} \right).\] This together with \eqref{n1} imply the first desired inequality. 

To prove the second inequality, we proceed similarly noting the first inequality in \eqref{01} and the fact that
\[{{s}_{j}}\left( \left( A-B \right){{\left( A\nabla B \right)}^{-1}}\left( A-B \right) \right)={{s}_{j}}\left( {{\left( A\nabla B \right)}^{-\frac{1}{2}}}{{\left( A-B \right)}^{2}}{{\left( A\nabla B \right)}^{-\frac{1}{2}}} \right)\]
for $j=1,2,\cdots $.
\end{proof}
Now we have the following refinements of \eqref{gumus_1} and \eqref{gumus_2}.
\begin{corollary}\label{cor_AG}
Let $A,B\in\mathcal{M}_n^{+}$ be such that $B\le A$. Then
\begin{equation}\label{6}
\begin{aligned}
   {{s}_{j}}\left( A\nabla B-A\sharp B \right)&\le \frac{1}{8}{{s}_{j}}\left( {{\left( A\sharp B \right)}^{-\frac{1}{2}}}{{\left( A-B \right)}^{2}}{{\left( A\sharp B \right)}^{-\frac{1}{2}}} \right) \\ 
 & \le \frac{1}{8}{{s}_{j}}\left( {{B}^{-\frac{1}{2}}}{{\left( A-B \right)}^{2}}{{B}^{-\frac{1}{2}}} \right)  
\end{aligned}
\end{equation}
and
\begin{equation}\label{7}
\begin{aligned}
   {{s}_{j}}\left( A\nabla B-A\sharp B \right)&\ge \frac{1}{8}{{s}_{j}}\left( {{\left( A\nabla B \right)}^{-\frac{1}{2}}}{{\left( A-B \right)}^{2}}{{\left( A\nabla B \right)}^{-\frac{1}{2}}} \right) \\ 
 & \ge \frac{1}{8}{{s}_{j}}\left( {{A}^{-\frac{1}{2}}}{{\left( A-B \right)}^{2}}{{A}^{-\frac{1}{2}}} \right)  
\end{aligned}
\end{equation}
for $j=1,2,\cdots,n $.
\end{corollary}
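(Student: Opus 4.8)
The plan is to notice first that the left-hand inequality in each of the chains \eqref{6} and \eqref{7} is exactly what the previous corollary already supplies, namely \eqref{2} and \eqref{4}. So the only thing left to prove is the \emph{second} inequality in each chain, and for this I would use nothing beyond the order hypothesis $B\le A$ together with three elementary facts: the identity $s_j(X^{*}X)=s_j(XX^{*})$, operator monotonicity of the map $T\mapsto T^{-1}$ (which reverses the order), and Weyl's monotonicity principle $0\le C\le D\Rightarrow s_j(C)\le s_j(D)$.

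A preliminary step I would record is that, for any $C\in\mathcal{M}_n^{+}$,
\[
s_j\!\left(C^{-\frac12}(A-B)^{2}C^{-\frac12}\right)=s_j\!\left((A-B)\,C^{-1}(A-B)\right),\qquad j=1,\dots,n,
\]
which is the identity $s_j(X^{*}X)=s_j(XX^{*})$ applied to $X=C^{-\frac12}(A-B)$, the matrix $A-B$ being Hermitian. This lets me move freely between the two presentations appearing in the statement.

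For the upper bound I would argue as follows. Since $B\le A$, inequality \eqref{order_intro} applied to the geometric mean (with $A,B$ interchanged, and using $A\sharp B=B\sharp A$) gives $B\le A\sharp B$. Inverting reverses the order: $(A\sharp B)^{-1}\le B^{-1}$. Conjugating this by the Hermitian matrix $A-B$ preserves the L\"owner order, so
\[
(A-B)(A\sharp B)^{-1}(A-B)\le (A-B)B^{-1}(A-B),
\]
whence Weyl's monotonicity principle yields $s_j\!\big((A-B)(A\sharp B)^{-1}(A-B)\big)\le s_j\!\big((A-B)B^{-1}(A-B)\big)$; rewriting both sides by the preliminary identity and multiplying by $\tfrac18$ gives the second inequality in \eqref{6}. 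The lower bound is completely parallel: from $B\le A$ and \eqref{order_intro} applied to the arithmetic mean, $A\nabla B\le A$, hence $A^{-1}\le (A\nabla B)^{-1}$, and conjugation by $A-B$ followed by Weyl's principle gives $s_j\!\big(A^{-\frac12}(A-B)^{2}A^{-\frac12}\big)\le s_j\!\big((A\nabla B)^{-\frac12}(A-B)^{2}(A\nabla B)^{-\frac12}\big)$, which is the second inequality in \eqref{7}.

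I do not expect a genuine obstacle in this argument; the only points that want a little care are the \emph{reversal} of order under inversion and the fact that congruence by a fixed Hermitian matrix preserves the order --- both standard. (If one prefers to avoid citing \eqref{order_intro}, the two order relations $B\le A\sharp B$ and $A\nabla B\le A$ also follow directly from $A\sharp B=B\sharp A\ge B\sharp B=B$ and $A\nabla B\le A\nabla A=A$.)
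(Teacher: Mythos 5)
Your proposal is correct and follows essentially the same route as the paper: both rest on $B\le A\sharp B\le A$ (hence $A^{-1}\le (A\sharp B)^{-1}\le B^{-1}$), congruence by $A-B$, Weyl's monotonicity principle, and the identity $s_j(X^{*}X)=s_j(XX^{*})$. The only cosmetic difference is that you import the first link of each chain from the preceding corollary, whereas the paper re-derives it from the operator inequality \eqref{1}.
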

\begin{proof}
Since $B\le A$, we have $B\le A\sharp B\le A$. So, ${{A}^{-1}}\le {{\left( A\sharp B \right)}^{-1}}\le {{B}^{-1}}$. Now, from the inequality \eqref{1}, we get
\begin{equation}\label{3}
\begin{aligned}
   A\nabla B-A\sharp B&\le \frac{1}{8}\left( A-B \right){{\left( A\sharp B \right)}^{-1}}\left( A-B \right) \\ 
 & \le \frac{1}{8}\left( A-B \right){{B}^{-1}}\left( A-B \right).  
\end{aligned}
\end{equation}
From the inequality \eqref{3} and Weyl's monotonicity principle we have
\[\begin{aligned}
   {{s}_{j}}\left( A\nabla B-A\sharp B \right)&\le \frac{1}{8}{{s}_{j}}\left( \left( A-B \right){{\left( A\sharp B \right)}^{-1}}\left( A-B \right) \right) \\ 
 & \le \frac{1}{8}{{s}_{j}}\left( \left( A-B \right){{B}^{-1}}\left( A-B \right) \right)  
\end{aligned}\]
Since ${{s}_{j}}\left( {{X}^{*}}X \right)={{s}_{j}}\left( X{{X}^{*}} \right)$ for $j=1,2,\ldots $, it can be seen that
\[{{s}_{j}}\left( \left( A-B \right){{B}^{-1}}\left( A-B \right) \right)={{s}_{j}}\left( {{B}^{-\frac{1}{2}}}{{\left( A-B \right)}^{2}}{{B}^{-\frac{1}{2}}} \right).\]
Therefore,
\[\begin{aligned}
   {{s}_{j}}\left( A\nabla B-A\sharp B \right)&\le \frac{1}{8}{{s}_{j}}\left( {{\left( A\sharp B \right)}^{-\frac{1}{2}}}{{\left( A-B \right)}^{2}}{{\left( A\sharp B \right)}^{-\frac{1}{2}}} \right) \\ 
 & \le \frac{1}{8}{{s}_{j}}\left( {{B}^{-\frac{1}{2}}}{{\left( A-B \right)}^{2}}{{B}^{-\frac{1}{2}}} \right),  
\end{aligned}\]
and this proves \eqref{6}. To prove \eqref{7}, \eqref{amgm_intro} implies
\[\begin{aligned}
   A\nabla B-A\sharp B&\ge \frac{1}{8}\left( A-B \right){{\left( A\nabla B \right)}^{-1}}\left( A-B \right) \\ 
 & \ge \frac{1}{8}\left( A-B \right){{A}^{-1}}\left( A-B \right),  
\end{aligned}\]
as required.
\end{proof}

As a byproduct of Theorem \ref{thm1}, we have the following improvement of the second inequality in \eqref{amgm_intro}.
\begin{corollary}
Let $A,B\in\mathcal{M}_n^+$ be such that $A-B$ is invertible. Then
\begin{align*}
A\sharp B\leq \frac{1}{8}(A-B)(A\nabla B-A\sharp B)^{-1}(A-B)\leq A\nabla B. 
\end{align*}
In particular, if $A-B$ is invertible, then so is $A\nabla B-A\sharp B.$
\end{corollary}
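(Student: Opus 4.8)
The plan is to read off everything from the identity \eqref{identity_1}, which holds for arbitrary $A,B\in\mathcal M_n^+$. Write $D=A-B$ (a Hermitian matrix) and $M=\frac{A\nabla B+A\sharp B}{2}$, which belongs to $\mathcal M_n^+$ because both $A\nabla B$ and $A\sharp B$ do. Then \eqref{identity_1} reads $A\nabla B-A\sharp B=\frac18 D M^{-1}D$, and the entire statement will follow by inverting this relation.

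First I would dispose of the invertibility assertion. Since $D$ is invertible by hypothesis and $M^{-1}\in\mathcal M_n^+$, the matrix $\frac18 D M^{-1}D$ is a $*$-congruence of the positive definite matrix $M^{-1}$ by the invertible Hermitian matrix $D$; hence $A\nabla B-A\sharp B\in\mathcal M_n^+$, and in particular it is invertible. This simultaneously proves the ``in particular'' clause and makes $(A\nabla B-A\sharp B)^{-1}$ well defined.

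Next I would invert the identity to isolate $M$. Conjugating $A\nabla B-A\sharp B=\frac18 D M^{-1}D$ by $D^{-1}$ gives $D^{-1}(A\nabla B-A\sharp B)D^{-1}=\frac18 M^{-1}$; taking inverses of both sides yields
$$\frac18 (A-B)(A\nabla B-A\sharp B)^{-1}(A-B)=\frac{A\nabla B+A\sharp B}{2}.$$
It then remains only to bound the right-hand side. From the arithmetic--geometric mean inequality \eqref{amgm_intro} we have $A\nabla B-A\sharp B\ge 0$, and therefore $A\sharp B\le \frac{A\sharp B+A\nabla B}{2}\le A\nabla B$, since each of the two differences equals $\frac12(A\nabla B-A\sharp B)\ge 0$. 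Substituting the displayed equality for the middle term gives the claimed two-sided inequality.

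The only delicate point---and it is not a genuine obstacle---is to record at the outset that $A-B$ is Hermitian, so that $D M^{-1}D$ is a true $*$-congruence and hence inherits both positivity and invertibility from $M^{-1}$; without this remark neither the invertibility of $A\nabla B-A\sharp B$ nor the passage to inverses would be justified.
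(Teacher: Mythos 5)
Your proof is correct. The paper's own justification is the single line ``direct manipulations of \eqref{01} imply the desired result,'' which most naturally means: conjugate the two-sided inequality \eqref{01} by $(A-B)^{-1}$ and use the antitonicity of the matrix inverse on each side separately (after first noting, from the lower bound in \eqref{01}, that $A\nabla B-A\sharp B$ is positive definite). You instead invert the exact identity \eqref{identity_1}, and this buys you strictly more: the middle term is not merely squeezed between $A\sharp B$ and $A\nabla B$ but is \emph{equal} to $\frac{A\nabla B+A\sharp B}{2}$, i.e.
\[
\frac{1}{8}(A-B)(A\nabla B-A\sharp B)^{-1}(A-B)=\frac{A\nabla B+A\sharp B}{2},
\]
after which the corollary reduces to the trivial observation that the midpoint of $A\sharp B$ and $A\nabla B$ lies between them. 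Your handling of the two technical points --- that $A-B$ is Hermitian so $DM^{-1}D$ is a genuine $*$-congruence of a positive definite matrix (giving both positivity and invertibility of $A\nabla B-A\sharp B$), and that the inversion step is legitimate only after that --- is exactly what the paper's terse proof leaves implicit, and you supply it correctly.
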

\begin{proof}
Direct manipulations of \eqref{01} imply the desired result.
\end{proof}

Now we move to the study of the difference $A\nabla_vB-A\sharp_vB,$ rather than $A\nabla B-A\sharp B.$ The obtained results complement those in \cite{2}. First a lemma.
\begin{lemma}\label{lemma_v}
Let $x\geq 1$ and $0\leq v\leq 1.$ Then
\[\frac{v(1-v)}{2x}(x-1)^2\leq 
\left( 1-v \right)+vx-{{x}^{v}}\le \frac{v\left( 1-v \right)}{2}{{\left(x-1 \right)}^{2}}\left(\frac{2x}{x+1}\right).\]
\end{lemma}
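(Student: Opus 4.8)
The plan is to prove both estimates by a single device: all three functions in play vanish together with their first derivatives at $x=1$, so it is enough to compare their \emph{second} derivatives on $[1,\infty)$. Write $f(x)=(1-v)+vx-x^{v}$, $L(x)=\frac{v(1-v)}{2x}(x-1)^{2}$ and $U(x)=\frac{v(1-v)}{2}(x-1)^{2}\cdot\frac{2x}{x+1}$, so that the claim reads $L(x)\le f(x)\le U(x)$ for $x\ge 1$. The cases $v=0$ and $v=1$ are trivial, since then $f\equiv L\equiv U\equiv 0$; so assume $0<v<1$. The elementary fact I will use twice is: if $g$ is twice differentiable on $[1,\infty)$ with $g(1)=g'(1)=0$ and $g''\ge 0$ on $[1,\infty)$, then $g\ge 0$ on $[1,\infty)$ (indeed $g'$ is then nondecreasing, hence $g'\ge g'(1)=0$, hence $g$ is nondecreasing, hence $g\ge g(1)=0$). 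Thus it suffices to check that $f-L$ and $U-f$ each vanish to first order at $x=1$ and are convex on $[1,\infty)$.

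First I would record the zeroth- and first-order data at $x=1$. A direct differentiation gives $f(1)=f'(1)=0$; rewriting $L(x)=\frac{v(1-v)}{2}\bigl(x-2+x^{-1}\bigr)$ shows $L(1)=L'(1)=0$; and carrying out the polynomial division $\frac{x(x-1)^{2}}{x+1}=x^{2}-3x+4-\frac{4}{x+1}$ gives $U(x)=v(1-v)\bigl(x^{2}-3x+4-\frac{4}{x+1}\bigr)$, whence $U(1)=U'(1)=0$. So $f-L$ and $U-f$ satisfy the first two hypotheses of the elementary fact, and only the sign of the second derivative remains.

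Next I would compute
\[
f''(x)=v(1-v)\,x^{v-2},\qquad L''(x)=v(1-v)\,x^{-3},\qquad U''(x)=v(1-v)\Bigl(2-\frac{8}{(x+1)^{3}}\Bigr).
\]
For the lower bound, $(f-L)''(x)=v(1-v)\bigl(x^{v-2}-x^{-3}\bigr)=v(1-v)\,x^{-3}\bigl(x^{v+1}-1\bigr)\ge 0$ on $[1,\infty)$, because $x\ge 1$ and $v+1>0$; hence $L\le f$. For the upper bound I need $(U-f)''\ge 0$ on $[1,\infty)$, i.e.\ $x^{v-2}\le 2-\frac{8}{(x+1)^{3}}$; this follows from the crude estimates $x^{v-2}\le x^{-1}\le 1$ (valid since $0<v\le 1$ and $x\ge 1$) together with $2-\frac{8}{(x+1)^{3}}\ge 2-1=1$ (valid since $(x+1)^{3}\ge 8$ for $x\ge 1$). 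Hence $f\le U$, and the lemma follows.

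The computations are all routine; the step most likely to need care is the upper bound, where one must first simplify $U$ (the polynomial division above) so that $U''$ is visibly nonnegative and easy to compare with $f''$, and then verify the single scalar inequality $x^{v-2}\le 2-8(x+1)^{-3}$ --- which, as above, is dispatched by the wasteful bound $x^{v-2}\le x^{-1}\le 1$. I note finally that $f''$, $L''$ and $U''$ all take the common value $v(1-v)$ at $x=1$, so both inequalities of the lemma are in fact sharp to second order at the point $x=1$.
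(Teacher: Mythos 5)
Your proof is correct and rests on the same mechanism as the paper's: each inequality is reduced to showing an auxiliary function vanishes together with its first derivative at $x=1$ and has one-signed second derivative on $[1,\infty)$. The only (cosmetic) difference is that the paper first multiplies through by the positive factors $2x$ and $\tfrac{x+1}{x}$ before differentiating, whereas you work with the raw differences $f-L$ and $U-f$ directly, which in fact makes the sign check $x^{v-2}\le 1\le 2-8(x+1)^{-3}$ a little cleaner than the paper's computation.
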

\begin{proof}
We prove the first inequality. We define 
$$
g(x):=2x^{v+1} +v(1-v)(x-1)^2-2(1-v)x-2vx^2, x\geq 1. 
$$
Simple calculations imply
$g'(x)=2(v+1)x^v-2vx-2v^2x+2v^2-2$, $g''(x)=2v(v+1)\left(x^{v-1}-1\right) \leq 0$.
Thus we have $g'(x)\leq g'(1) =0$ which implies $g(x)\leq g(1)=0$. 

To prove the second inequality, we set 
$$
f(x):=(1-v)\left(1+\frac{1}{x}\right)+v(x+1)-x^{v-1}(x+1)-v(1-v)(x-1)^2, x\geq 1.
$$
By simple calculations, we have
$f'(x)=x^{-2}\left\{ x^v+2v^2(x-1)x^2 -1 -v\left(x^{v+1}+x^v+2x^3-3x^2-1\right) \right\}$ and $f''(x) =(1-v)\left\{ 2x^{-3}(1-x^v)+v\left(x^{v-2}+x^{v-3}-2\right)\right\} \leq 0$ since we have
$x^{v-2} \leq 1$ and $x^{v-3} \leq 1$ for $x \geq 1$ and $0\leq v \leq 1$. Thus we have $f'(x) \leq f'(1)=0$ which implies $f(x) \leq f(1)=0$.
\end{proof}
Manipulating Lemma \ref{lemma_v} implies the following bounds for the difference $A\nabla_vB-A\sharp_vB.$
\begin{theorem}\label{theorem1.2}
Let $A,B\in\mathcal{M}_n^+$ with $A\leq B$ and let $0\leq v\leq 1.$ Then
\begin{equation}\label{8}
\frac{v(1-v)}{2}(B-A)B^{-1}(B-A)\leq
A{{\nabla }_{v}}B-A{{\sharp}_{v}}B\le \frac{v\left( 1-v \right)}{2}\left( B-A \right)A^{-1}(A!B)A^{-1}\left( B-A \right).
\end{equation}
\end{theorem}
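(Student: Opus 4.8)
\emph{Proof proposal.} The plan is to transfer the scalar estimates of Lemma \ref{lemma_v} to the matrix level by functional calculus, in exactly the spirit of the proof of Theorem \ref{thm1}, and then to conjugate by $A^{\frac{1}{2}}$. Since $A,B\in\mathcal{M}_n^+$ with $A\leq B$, the matrix $X:=A^{-\frac{1}{2}}BA^{-\frac{1}{2}}$ satisfies $X\geq I$, so its spectrum lies in $[1,\infty)$ and Lemma \ref{lemma_v} is applicable at $X$. Before substituting I would record the outer bounds in the \emph{balanced} form
\[
\frac{v(1-v)}{2}\,(x-1)\,x^{-1}\,(x-1)\;\leq\;(1-v)+vx-x^{v}\;\leq\;\frac{v(1-v)}{2}\,(x-1)\left(\frac{2x}{x+1}\right)(x-1),
\]
where, all factors being functions of the single variable $x$, functional calculus at $X$ produces a chain of inequalities between self-adjoint matrices whose middle term is $(1-v)I+vX-X^{v}$.

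Next I would conjugate the whole chain by the invertible matrix $A^{\frac{1}{2}}$, which preserves the order. For the middle term, $A^{\frac{1}{2}}\bigl((1-v)I+vX-X^{v}\bigr)A^{\frac{1}{2}}=(1-v)A+vB-A^{\frac{1}{2}}(A^{-\frac{1}{2}}BA^{-\frac{1}{2}})^{v}A^{\frac{1}{2}}=A\nabla_vB-A\sharp_vB$. For the lower bound, the identities $A^{\frac{1}{2}}(X-I)=(B-A)A^{-\frac{1}{2}}$ and $A^{-\frac{1}{2}}X^{-1}A^{-\frac{1}{2}}=B^{-1}$ give $A^{\frac{1}{2}}(X-I)X^{-1}(X-I)A^{\frac{1}{2}}=(B-A)B^{-1}(B-A)$, which is the left-hand side of \eqref{8}. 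For the upper bound I would first note that, since $X$ commutes with $I$, the value of $\frac{2x}{x+1}$ at $X$ is precisely the harmonic mean $X\,!\,I=2(X^{-1}+I)^{-1}$; then, writing
\[
A^{\frac{1}{2}}(X-I)(X\,!\,I)(X-I)A^{\frac{1}{2}}=\bigl[A^{\frac{1}{2}}(X-I)A^{-\frac{1}{2}}\bigr]\bigl[A^{\frac{1}{2}}(X\,!\,I)A^{\frac{1}{2}}\bigr]\bigl[A^{-\frac{1}{2}}(X-I)A^{\frac{1}{2}}\bigr],
\]
I would use $A^{\frac{1}{2}}(X-I)A^{-\frac{1}{2}}=(B-A)A^{-1}$, its adjoint $A^{-\frac{1}{2}}(X-I)A^{\frac{1}{2}}=A^{-1}(B-A)$, and the congruence invariance of the harmonic mean, $A^{\frac{1}{2}}(X\,!\,I)A^{\frac{1}{2}}=(A^{\frac{1}{2}}XA^{\frac{1}{2}})\,!\,(A^{\frac{1}{2}}IA^{\frac{1}{2}})=B\,!\,A=A\,!\,B$, to conclude that this product equals $(B-A)A^{-1}(A!B)A^{-1}(B-A)$, the right-hand side of \eqref{8}.

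The main obstacle is essentially careful bookkeeping: one must keep the two quadratic factors balanced around the central factor \emph{before} applying functional calculus, so that conjugation by $A^{\frac{1}{2}}$ yields the manifestly self-adjoint positive expressions appearing in \eqref{8} rather than lopsided products; and one must recognize the scalar weight $\frac{2x}{x+1}$ as the harmonic mean of $x$ and $1$ in order to invoke its congruence invariance. Once these two points are in place, the argument is a direct transcription of the method used to prove Theorem \ref{thm1}.
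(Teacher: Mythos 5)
Your proposal is correct and follows essentially the same route as the paper: apply Lemma \ref{lemma_v} by functional calculus at $X=A^{-\frac{1}{2}}BA^{-\frac{1}{2}}\geq I$ and conjugate by $A^{\frac{1}{2}}$, with the middle term becoming $A\nabla_vB-A\sharp_vB$ and the outer terms becoming $(B-A)B^{-1}(B-A)$ and $(B-A)A^{-1}(A!B)A^{-1}(B-A)$. The only cosmetic difference is bookkeeping: the paper inserts factors $A^{\frac{1}{2}}A^{-1}A^{\frac{1}{2}}=I$ before conjugating, while you identify $\frac{2x}{x+1}$ as the harmonic mean $x\,!\,1$ and invoke its congruence invariance — both yield the same computation.
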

\begin{proof}
In Lemma \ref{lemma_v}, let $x=A^{-\frac{1}{2}}BA^{-\frac{1}{2}} \geq I.$ Then
\[\begin{aligned}
  &\frac{v(1-v)}{2} \left(A^{-\frac{1}{2}}BA^{-\frac{1}{2}}-I\right)
  A^{\frac{1}{2}}B^{-1}A^{\frac{1}{2}}
  \left(A^{-\frac{1}{2}}BA^{-\frac{1}{2}}-I\right)
  \leq \left( 1-v \right)I+v{{A}^{-\frac{1}{2}}}B{{A}^{-\frac{1}{2}}}-{{\left( {{A}^{-\frac{1}{2}}}B{{A}^{-\frac{1}{2}}} \right)}^{v}} \\ 
 & \le\frac{v\left( 1-v \right)}{2}\left( I-{{A}^{-\frac{1}{2}}}B{{A}^{-\frac{1}{2}}} \right){{A}^{\frac{1}{2}}}{{A}^{-1}}{{A}^{\frac{1}{2}}}{{\left( \frac{I+{{\left( {{A}^{-\frac{1}{2}}}B{{A}^{-\frac{1}{2}}} \right)}^{-1}}}{2} \right)}^{-1}}{{A}^{\frac{1}{2}}}{{A}^{-1}}{{A}^{\frac{1}{2}}}\left( I-{{A}^{-\frac{1}{2}}}B{{A}^{-\frac{1}{2}}} \right).
\end{aligned}\]
Multiply both sides by ${{A}^{\frac{1}{2}}}$ implies the desired result.
\end{proof}

In the following Lemma, we present the complement of Lemma \ref{lemma_v}, so that we can show a complement of Theorem \ref{theorem1.2}.
\begin{lemma}\label{lemma_vi}
Let $0< x\leq 1$ and $0\leq v\leq 1.$ Then
$$
\frac{v\left( 1-v \right)}{2}{{\left(x-1 \right)}^{2}}\left(\frac{2x}{x+1}\right)
\leq \left( 1-v \right)+vx-{{x}^{v}}\leq
 \frac{v(1-v)}{2x}(x-1)^2.
$$
\end{lemma}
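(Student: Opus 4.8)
The plan is to recycle, on the interval $(0,1]$, the two auxiliary functions already built in the proof of Lemma \ref{lemma_v}, and to exploit the fact that the sign of their second derivatives reverses there; that reversal is precisely what interchanges the upper and lower bounds. In both halves $x=1$ is the common point at which the relevant function and its first derivative vanish.

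First I would prove the right-hand inequality. Keep $g(x):=2x^{v+1}+v(1-v)(x-1)^2-2(1-v)x-2vx^2$, and recall from the proof of Lemma \ref{lemma_v} that $g(1)=g'(1)=0$ while $g''(x)=2v(v+1)(x^{v-1}-1)$. For $0<x\leq 1$ and $0\leq v\leq 1$ we have $x^{v-1}=x^{-(1-v)}\geq 1$, so $g''(x)\geq 0$ on $(0,1]$; hence $g'$ is nondecreasing there, and $g'(1)=0$ forces $g'(x)\leq 0$, so $g$ is nonincreasing on $(0,1]$, and $g(1)=0$ then gives $g(x)\geq 0$ on $(0,1]$. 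Dividing $g(x)\geq 0$ by $2x>0$ and rearranging yields $(1-v)+vx-x^v\leq \frac{v(1-v)}{2x}(x-1)^2$, the claimed upper bound.

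For the left-hand inequality I would run the same argument with $f(x):=(1-v)\left(1+\frac1x\right)+v(x+1)-x^{v-1}(x+1)-v(1-v)(x-1)^2$, for which the proof of Lemma \ref{lemma_v} records $f(1)=f'(1)=0$ and $f''(x)=(1-v)\left\{2x^{-3}(1-x^v)+v\left(x^{v-2}+x^{v-3}-2\right)\right\}$. On $(0,1]$ each of $1-x^v$, $x^{v-2}-1$ and $x^{v-3}-1$ is nonnegative, so $f''(x)\geq 0$; proceeding exactly as above (monotonicity of $f'$, then of $f$, using the vanishing at $x=1$) gives $f(x)\geq 0$ on $(0,1]$, and multiplying by $\frac{x}{x+1}>0$ turns this into $(1-v)+vx-x^v\geq \frac{v(1-v)}{2}(x-1)^2\cdot\frac{2x}{x+1}$, as desired. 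Alternatively, this lower bound — indeed the stronger bound $\frac{v(1-v)}{2}(x-1)^2$ — follows from the first inequality of Lemma \ref{lemma_v} applied to $\frac1x\geq 1$ with $v$ replaced by $1-v$, via the identity $(1-v)+vx-x^v=x\left(v+(1-v)\tfrac1x-(\tfrac1x)^{1-v}\right)$, together with $\frac{2x}{x+1}\leq 1$ on $(0,1]$.

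There is no genuinely new difficulty: all the calculus was already done for Lemma \ref{lemma_v}. The only point demanding care is the sign bookkeeping — observing that $x^{v-1}-1$, $x^{v-2}-1$, $x^{v-3}-1$ and $1-x^v$ each change sign as $x$ crosses $1$ — since this is exactly the mechanism that swaps the two bounds when the domain changes from $[1,\infty)$ to $(0,1]$.
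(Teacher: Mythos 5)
Your proposal is correct and follows essentially the same route as the paper: it reuses the functions $f$ and $g$ from Lemma \ref{lemma_v}, observes that their second derivatives become nonnegative on $(0,1]$, and concludes $f,g\geq 0$ from the vanishing of the function and its derivative at $x=1$. The alternative derivation of the lower bound via Lemma \ref{lemma_v} applied to $1/x$ with $v\mapsto 1-v$ is a nice extra (and even yields the stronger bound $\tfrac{v(1-v)}{2}(x-1)^2$), but the core argument coincides with the paper's.
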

\begin{proof}
To prove the first inequality, we set the function on $0< x \leq 1$,
$$
f(x):=(1-v)\left(1+\frac{1}{x}\right)+v(x+1)-x^{v-1}(x+1)-v(1-v)(x-1)^2.
$$
By simple calculations, we have
$f'(x)=x^{-2}\left\{ x^v+2v^2(x-1)x^2 -1 -v\left(x^{v+1}+x^v+2x^3-3x^2-1\right) \right\}$ and $f''(x) =(1-v)\left\{ 2x^{-3}(1-x^v)+v\left(x^{v-2}+x^{v-3}-2\right)\right\} \geq 0$ since we have
$x^{v-2} \geq 1$ and $x^{v-3} \geq 1$ for $0<x \leq 1$ and $0\leq v \leq 1$. Thus we have $f'(x) \leq f'(1)=0$ which implies $f(x) \geq f(1)=0$.

To prove the second inequality, we set the function on $0< x \leq 1$,
$$
g(x):=2x^{v+1}+v(1-v)(x-1)^2-2(1-v)x-2vx^2.
$$
By simple calculations, we have
$g'(x)=2(v+1)x^v+2v(1-v)(x-1)-2(1-v)-4vx$ and $g''(x)=2v(v+1)(x^{v-1}-1)\geq 0$ for $0< x \leq 1$ and $0\leq v \leq 1$.
Thus we have $g'(x) \leq g'(1) =0$ which implies $g(x) \geq g(1) =0$.
\end{proof}

Similar to the proof of Theorem \ref{theorem1.2}, we have the following.
\begin{theorem}\label{theorem1.3}
Let $A,B\in\mathcal{M}_n^+$ with $B\leq A$ and let $0\leq v\leq 1.$ Then
\begin{equation}\label{8}
\frac{v\left( 1-v \right)}{2}\left( A-B \right)A^{-1}(A!B)A^{-1}\left( A-B \right)\leq 
A{{\nabla }_{v}}B-A{{\sharp}_{v}}B \leq
\frac{v(1-v)}{2}(A-B)B^{-1}(A-B).
\end{equation}
\end{theorem}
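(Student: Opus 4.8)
The plan is to run the argument of Theorem~\ref{theorem1.2} with Lemma~\ref{lemma_vi} in place of Lemma~\ref{lemma_v}. Since $B\le A$, the positive matrix $X:=A^{-\frac12}BA^{-\frac12}$ satisfies $0<X\le I$, so its spectrum lies in $(0,1]$ and Lemma~\ref{lemma_vi} applies to $x=X$ via the continuous functional calculus.

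First I would put the two scalar bounds of Lemma~\ref{lemma_vi} into a symmetric, operator-friendly shape. Writing $\tfrac{2x}{x+1}$ as $1\,!\,x$, the harmonic mean of $1$ and $x$, Lemma~\ref{lemma_vi} reads
$$\frac{v(1-v)}{2}\,(x-1)\bigl(1\,!\,x\bigr)(x-1)\ \le\ (1-v)+vx-x^{v}\ \le\ \frac{v(1-v)}{2}\,(x-1)\,x^{-1}(x-1),\qquad 0<x\le 1.$$
Since each factor is a function of the single operator $X$, the functional calculus promotes this to the operator inequality obtained by replacing $x$ with $X$. Next I would apply the congruence $Y\mapsto A^{\frac12}YA^{\frac12}$, which is order preserving, inserting a copy of $I=A^{\frac12}A^{-1}A^{\frac12}$ in each of the two gaps between consecutive factors, exactly as in the proof of Theorem~\ref{theorem1.2}. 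The central term becomes $A\nabla_v B-A\sharp_v B$, since $A^{\frac12}XA^{\frac12}=B$ and $A^{\frac12}X^{v}A^{\frac12}=A\sharp_v B$. For the upper bound, using $A^{\frac12}(X-I)A^{\frac12}=B-A$ and $A^{\frac12}X^{-1}A^{\frac12}=AB^{-1}A$ one obtains $(B-A)A^{-1}(AB^{-1}A)A^{-1}(B-A)=(A-B)B^{-1}(A-B)$. For the lower bound, using in addition $A^{\frac12}\bigl(I\,!\,X\bigr)A^{\frac12}=A\,!\,B$ one obtains $(B-A)A^{-1}(A\,!\,B)A^{-1}(B-A)=(A-B)A^{-1}(A\,!\,B)A^{-1}(A-B)$. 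Collecting the three identities gives the two asserted inequalities.

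The computation is routine bookkeeping; the only point that repays a moment's attention is that the scalar factor $\tfrac{2x}{x+1}$, after the congruence by $A^{\frac12}$ with the $A^{\frac12}A^{-1}A^{\frac12}$ insertion, becomes the matrix harmonic mean $A\,!\,B=2(A^{-1}+B^{-1})^{-1}$. The needed identities $A^{\frac12}\bigl(I\,!\,X\bigr)A^{\frac12}=A\,!\,B$ and $A^{\frac12}X^{-1}A^{\frac12}=AB^{-1}A$ are immediate once $X=A^{-\frac12}BA^{-\frac12}$ is unwound, and nothing beyond Lemma~\ref{lemma_vi} and the order-preserving property of congruence is used.
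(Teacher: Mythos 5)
Your proof is correct and is exactly the argument the paper intends: the paper gives no written proof for this theorem, saying only that it is ``similar to the proof of Theorem \ref{theorem1.2}'', and your proposal carries out precisely that plan, applying Lemma \ref{lemma_vi} to $X=A^{-\frac12}BA^{-\frac12}\le I$ and conjugating by $A^{\frac12}$ with the $A^{\frac12}A^{-1}A^{\frac12}$ insertions. The identifications $A^{\frac12}X^{-1}A^{\frac12}=AB^{-1}A$ and $A^{\frac12}(I\,!\,X)A^{\frac12}=A\,!\,B$ are verified correctly, so the details you supply are sound.
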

Combining Theorems \ref{theorem1.2} and \ref{theorem1.3} and noting symmetry of $v(1-v)$ about $v=\frac{1}{2}$, we obtain \eqref{hirz_intro} as a corollary.

\begin{corollary}
Let $A,B\in\mathcal{M}_n^+$ be such that  $B\leq A$. Then
\begin{equation}\label{sing_v}
\frac{v\left( 1-v \right)}{2}{{s }_{j}}\left( {{A}^{-\frac{1}{2}}}{{\left( A-B \right)}^{2}}{{A}^{-\frac{1}{2}}} \right)\leq {{s }_{j}}\left( A{{\nabla }_{v}}B-A{{\sharp}_{v}}B \right)\le \frac{v\left( 1-v \right)}{2}{{s }_{j}}\left( {{B}^{-\frac{1}{2}}}{{\left( A-B \right)}^{2}}{{B}^{-\frac{1}{2}}} \right)
\end{equation}
\end{corollary}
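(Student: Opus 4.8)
The plan is to obtain the two bounds in \eqref{sing_v} by inserting the operator inequalities of Theorems \ref{theorem1.3} and \ref{theorem1.2} into Weyl's monotonicity principle, after discarding the intermediate terms involving $A!B$ in the correct direction, and then using the standard identity $s_j(X^*X)=s_j(XX^*)$.

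\emph{Upper bound.} Since $B\le A$, Theorem \ref{theorem1.3} gives in particular
\[
A\nabla_v B-A\sharp_v B\le \frac{v(1-v)}{2}(A-B)B^{-1}(A-B),
\]
and both sides are positive. Weyl's monotonicity principle then yields $s_j\!\left(A\nabla_v B-A\sharp_v B\right)\le \frac{v(1-v)}{2}\,s_j\!\left((A-B)B^{-1}(A-B)\right)$, and writing $(A-B)B^{-1}(A-B)=X^*X$ with $X=B^{-1/2}(A-B)$ gives $s_j\!\left((A-B)B^{-1}(A-B)\right)=s_j\!\left(B^{-1/2}(A-B)^2B^{-1/2}\right)$. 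This is the right-hand inequality of \eqref{sing_v}.

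\emph{Lower bound.} Here the lower estimate in Theorem \ref{theorem1.3} is not directly usable: since $B\le A$ forces $A!B\le A$ (by \eqref{order_intro}), one only gets $(A-B)A^{-1}(A!B)A^{-1}(A-B)\le (A-B)A^{-1}(A-B)$, which points the wrong way. Instead I apply Theorem \ref{theorem1.2} to the pair $(B,A)$ --- legitimate because $B\le A$ --- whose lower bound reads
\[
\frac{v(1-v)}{2}(A-B)A^{-1}(A-B)\le B\nabla_v A-B\sharp_v A.
\]
Now I use the elementary identities $B\nabla_v A=A\nabla_{1-v}B$ and $B\sharp_v A=A\sharp_{1-v}B$ together with the symmetry of $v\mapsto v(1-v)$ about $v=\tfrac12$; after relabeling $1-v$ as $v$ this becomes $\frac{v(1-v)}{2}(A-B)A^{-1}(A-B)\le A\nabla_v B-A\sharp_v B$, valid for all $0\le v\le1$. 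Applying Weyl's monotonicity principle and $s_j(X^*X)=s_j(XX^*)$ with $X=A^{-1/2}(A-B)$ then produces the left-hand inequality of \eqref{sing_v}.

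\emph{Main obstacle.} There is essentially no analytic difficulty left --- everything is already contained in Lemmas \ref{lemma_v}, \ref{lemma_vi} and Theorems \ref{theorem1.2}, \ref{theorem1.3}. The only point requiring attention is that one cannot read \eqref{sing_v} off a single one of the two theorems: the $A!B$-refined bound lies on the ``wrong'' side for one of the two estimates, so Theorem \ref{theorem1.3} (the $B\le A$ case) must be combined with Theorem \ref{theorem1.2} applied after swapping $A$ and $B$, i.e.\ effectively replacing $v$ by $1-v$, which is harmless precisely because $v(1-v)$ is symmetric about $\tfrac12$.
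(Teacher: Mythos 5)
Your proposal is correct and follows exactly the route the paper indicates: it obtains the upper bound from Theorem \ref{theorem1.3} and the lower bound from Theorem \ref{theorem1.2} applied with $A$ and $B$ interchanged (hence $v\mapsto 1-v$, absorbed by the symmetry of $v(1-v)$), then applies Weyl's monotonicity and $s_j(X^*X)=s_j(XX^*)$. Your remark that the $A!B$-refined lower bound of Theorem \ref{theorem1.3} points the wrong way, forcing the swap, is an accurate and useful clarification of the paper's one-line justification.
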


On the other hand, when $\frac{1}{2}\leq v\leq 1,$ we have the following estimates. It should be remarked that the next estimates are better than thoes given in Lemma \ref{lemma_v}. These will help better see how \cite[Corollary 1]{2} is refined when $\frac{1}{2}\leq v\leq 1.$
\begin{lemma}\label{lemma_v_2}
If (i) $x\geq 1$ and ${1}/{2}\;\le v\le 1$ or (ii) $0< x \le 1$ and $0\le v \le 1/2$, then
\[
\left( 1-v \right)+vx-{{x}^{v}} \ge  \frac{v\left( 1-v \right)}{2}{{\left( 1-x \right)}^{2}}{{\left( \frac{1+x}{2} \right)}^{-1}}.
\]
\end{lemma}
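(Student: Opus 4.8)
The plan is to reduce the claimed scalar inequality to a one–variable statement amenable to the same differentiation technique used for Lemmas \ref{lemma_v} and \ref{lemma_vi}. Since $\left(\frac{1+x}{2}\right)^{-1}=\frac{2}{1+x}$ and $1+x>0$ for $x>0$, the asserted inequality is equivalent to $H(x)\ge 0$, where
$$
H(x):=(1+x)\bigl((1-v)+vx-x^{v}\bigr)-v(1-v)(1-x)^{2},\qquad x>0,\ 0\le v\le 1 .
$$
(The boundary values $v=0,1$ are trivial, so one may assume $0<v<1$.) First I would record that $x=1$ is a triple zero of $H$: writing $p(x)=(1-v)+vx-x^{v}$ one has $p(1)=p'(1)=0$ and $p''(1)=v(1-v)$, from which a short computation gives $H(1)=H'(1)=H''(1)=0$.

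Next I would differentiate once more and observe the clean factorization
$$
H'''(x)=3p''(x)+(1+x)p'''(x)=v(1-v)\,x^{v-3}\bigl((v+1)x-(2-v)\bigr),
$$
so that the sign of $H'''(x)$ is the sign of $(v+1)x-(2-v)$, which changes at the threshold $x_{0}:=\dfrac{2-v}{1+v}$. The key point, and the reason the two hypotheses are stated as they are, is the elementary equivalence $x_{0}\le 1\iff v\ge\frac{1}{2}$ (and $x_{0}\ge 1\iff v\le\frac{1}{2}$).

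Under hypothesis (i) we have $x\ge 1\ge x_{0}$, hence $H'''\ge 0$ on $[1,\infty)$; combined with $H''(1)=0$ this gives $H''\ge 0$, then $H'$ nondecreasing together with $H'(1)=0$ gives $H'\ge 0$, then $H$ nondecreasing together with $H(1)=0$ gives $H\ge 0$ on $[1,\infty)$. Under hypothesis (ii) we have $0<x\le 1\le x_{0}$, hence $H'''\le 0$ on $(0,1]$; then $H''$ is nonincreasing and $H''(1)=0$ forces $H''\ge 0$ on $(0,1]$, so $H'$ is nondecreasing and $H'(1)=0$ forces $H'\le 0$ there, so $H$ is nonincreasing and $H(1)=0$ forces $H\ge 0$ on $(0,1]$. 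In either case $H\ge 0$, which is the claim. I expect the only laborious step to be the differentiation producing the factored form of $H'''$; once that is done the argument is the same threefold monotonicity cascade in both cases, the single nontrivial observation being that the sign-change point $x_{0}=\frac{2-v}{1+v}$ lies on the correct side of $1$ exactly under the stated restrictions on $v$.
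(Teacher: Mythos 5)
Your proof is correct, but it takes a genuinely different route from the paper's. The paper keeps the denominator and works with $f(x)=(1-v)+vx-x^{v}-v(1-v)\frac{(1-x)^{2}}{1+x}$ directly: it computes $f''(x)=v(1-v)\left(x^{v-2}-\frac{8}{(1+x)^{3}}\right)$, proves this is nonnegative by observing that $x^{v-2}-\frac{8}{(1+x)^{3}}$ is monotone in $v$ and reduces to $x^{-3/2}-\frac{8}{(1+x)^{3}}\ge 0$ at $v=\frac12$ (an AM--GM fact), and then runs a two-step cascade from $f''\ge 0$ and $f'(1)=f(1)=0$. You instead clear the denominator, set $H(x)=(1+x)\bigl((1-v)+vx-x^{v}\bigr)-v(1-v)(1-x)^{2}$, note the triple zero at $x=1$, and push to the third derivative, where the sign is governed by the single linear factor $(v+1)x-(2-v)$; the hypothesis $v\gtrless\frac12$ enters only through the location of the root $x_{0}=\frac{2-v}{1+v}$ relative to $1$. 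I checked your computations: $H(1)=H'(1)=H''(1)=0$ and $H'''(x)=v(1-v)x^{v-3}\bigl((v+1)x-(2-v)\bigr)$ are all correct, and the threefold monotonicity cascade is sound in both cases (with $v=0,1$ handled trivially, as you note). What your approach buys is a cleaner sign determination --- a linear factor rather than a comparison of $x^{v-2}$ against $\frac{8}{(1+x)^{3}}$ --- and a uniform treatment of both cases anchored entirely at $x=1$; this is arguably tidier than the paper's case (ii), which anchors at $x=0$ where $f'$ is not even finite for $0<v<1$. The cost is one extra differentiation and the bookkeeping of three vanishing derivatives at $x=1$.
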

\begin{proof}
We firstly consider the case (i).
For the given parameters, define
$$f(x)=\left( 1-v \right)+vx-{{x}^{v}}- \frac{v\left( 1-v \right)}{2}{{\left( 1-x \right)}^{2}}{{\left( \frac{1+x}{2} \right)}^{-1}}.$$
Direct calculus computations imply
$$f''(x)=v(1-v)g(v),\;{\text{where}}\;g(v)=\left(x^{v-2}-\frac{8}{(1+x)^3}\right).$$
Since $$g'(v)=x^{v-2}\log x,$$ it follows that $g$ is an increasing function of $v$ when $x\geq 1$.
When $v\geq\frac{1}{2}, x\geq 1$, we have $g(v)\geq g\left(\frac{1}{2}\right)= x^{-3/2}-\frac{8}{(1+x)^3}\geq 0.$  Since $g(v)\geq 0$, it follows that $f''(x)\geq 0$, when $x\geq 1$ and $0\leq v\leq \frac{1}{2}.$ Consequently, $f'(x)\geq f'(1)=0$ and hence, $f(x)\geq f(1)= 0.$ This shows that $f(x)\geq 0$ for all $x\geq 1$, which completes the proof of the first inequality.

Next, we consider the case (ii). For this case, we have $f''(x) \geq 0$ since $g'(v) \leq 0$ for $0<x \leq 1$ so that 
$g(v) \geq g(1/2)=x^{-3/2}-\dfrac{8}{(x+1)^3} \geq 0$. Thus we have
$f'(x) > f'(0) =v(1+3(1-v)) \geq 0$ which implies $f(x) >f(0) =(1-v)^2 \ge 0$. 
\end{proof}

We remark that the following inequality does not hold in general.
$$
(1-v)+vx-x^v \le \frac{v(1-v)}{2}\frac{(x-1)^2}{\sqrt{x}}
$$
for neither (i) $x\geq 1$ and ${1}/{2}\;\le v\le 1$ nor (ii) $0< x \le 1$ and $0\le v \le 1/2$.
Now proceeding with functional calculus argument as before implies the following matrix inequality, which we use next to obtain a refinement of \cite[Corollary 1]{2}.
\begin{corollary} \label{cor_2.5}
Let $A,B\in\mathcal{M}_n^+$. If (i) $A \le B$ and $\frac{1}{2}\leq v\leq 1$ or (ii) $A \geq B$ and $0 \le v \le 1/2$, then
\[ A\nabla_v B-A\sharp_v B \geq \frac{v(1-v)}{2}(B-A)\left(A\nabla B\right)^{-1}(B-A).\]
\end{corollary}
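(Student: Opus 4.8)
The plan is to convert the scalar estimate of Lemma \ref{lemma_v_2} into an operator inequality by functional calculus applied to the positive definite matrix $X:=A^{-\frac12}BA^{-\frac12}$, and then to conjugate by $A^{\frac12}$, exactly in the spirit of the proofs of Theorem \ref{thm1} and Theorem \ref{theorem1.2}. First I would note that hypothesis (i), $A\le B$, is equivalent to $X\ge I$, while hypothesis (ii), $A\ge B$, is equivalent to $0<X\le I$; in either case the spectrum of $X$ lies in precisely the range covered by Lemma \ref{lemma_v_2}. Writing the scalar inequality there in the (equivalent) form
\[
(1-v)+vx-x^{v}\;\ge\;\frac{v(1-v)}{2}\,(1-x)\left(\frac{1+x}{2}\right)^{-1}(1-x),
\]
and invoking functional calculus for $X$, I obtain
\[
(1-v)I+vX-X^{v}\;\ge\;\frac{v(1-v)}{2}\,(I-X)\left(\frac{I+X}{2}\right)^{-1}(I-X).
\]

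Next I would multiply both sides on the left and on the right by $A^{\frac12}$. On the left-hand side, $A^{\frac12}\big((1-v)I+vX\big)A^{\frac12}=(1-v)A+vB=A\nabla_{v}B$ and $A^{\frac12}X^{v}A^{\frac12}=A\sharp_{v}B$, so the left-hand side becomes $A\nabla_{v}B-A\sharp_{v}B$. On the right-hand side I would insert $I=A^{\frac12}A^{-1}A^{\frac12}$ twice, factoring the product as
\[
A^{\frac12}(I-X)A^{\frac12}\cdot\Big(A^{\frac12}\tfrac{I+X}{2}A^{\frac12}\Big)^{-1}\cdot A^{\frac12}(I-X)A^{\frac12},
\]
and then use $A^{\frac12}(I-X)A^{\frac12}=A-B$ together with $A^{\frac12}\tfrac{I+X}{2}A^{\frac12}=A\nabla B$ to rewrite this as $(A-B)(A\nabla B)^{-1}(A-B)$. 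Since $(A-B)(A\nabla B)^{-1}(A-B)=(B-A)(A\nabla B)^{-1}(B-A)$, this yields exactly the asserted inequality.

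The argument is essentially bookkeeping once Lemma \ref{lemma_v_2} is available. The only point requiring care is threading the factors $A^{\frac12}A^{-1}A^{\frac12}$ through the inverse $\big(\tfrac{I+X}{2}\big)^{-1}$ — that is, recognizing $\Big(A^{\frac12}\tfrac{I+X}{2}A^{\frac12}\Big)^{-1}=A^{-\frac12}\big(\tfrac{I+X}{2}\big)^{-1}A^{-\frac12}$ and pairing it correctly with the adjacent $(I-X)$ factors, which is the same congruence manipulation that produced identity \eqref{5} in the proof of Theorem \ref{thm1}. I anticipate no genuine obstacle beyond this routine rearrangement; in particular, the positivity of $A\nabla B$ guarantees all indicated inverses exist.
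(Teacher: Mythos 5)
Your proposal is correct and is exactly the argument the paper intends: the paper proves this corollary by the one-line remark ``proceeding with functional calculus argument as before,'' and your write-up simply fills in that functional-calculus-plus-congruence-by-$A^{\frac12}$ computation in the same way as the proofs of Theorem \ref{thm1} and Theorem \ref{theorem1.2}. No substantive difference from the paper's route.
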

When $v=1/2$ in Corollary \ref{cor_2.5}, we recover Theorem \ref{thm1} under the condition $A\le B$.
Consequently, we obtain the following refinement of \cite[Corollary 1]{2}, for $\frac{1}{2}\leq v\leq 1.$

\begin{corollary}
Let $A,B\in\mathcal{M}_n^+$ be such that (i) $A \le B$ and $\frac{1}{2}\leq v\leq 1$ or (ii) $A \geq B$ and $0 \le v \le 1/2$. Then, for $j=1,2,\cdots,n,$
\begin{align*}
s_j\left(A\nabla_v B-A\sharp_v B\right)&\geq \frac{v(1-v)}{2} s_j\left((A-B)\left(A\nabla B\right)^{-1}(A-B)\right)\\
&\geq \frac{v(1-v)}{2} s_j\left(A^{-\frac{1}{2}} (A-B)^2 A^{-\frac{1}{2}}\right).
\end{align*}

\end{corollary}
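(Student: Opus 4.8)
The plan is to chain two singular-value comparisons, each obtained from an operator inequality via Weyl's monotonicity principle. The first link is essentially immediate from Corollary \ref{cor_2.5}; the second rests on a Loewner comparison of inverses that must be set up with care, and this is where the interesting point lies.

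First I would derive the first inequality. Under hypotheses (i) or (ii), Corollary \ref{cor_2.5} gives the operator inequality
\[
A\nabla_v B - A\sharp_v B \ge \frac{v(1-v)}{2}\,(B-A)(A\nabla B)^{-1}(B-A).
\]
Since $(B-A)(A\nabla B)^{-1}(B-A)=(A-B)(A\nabla B)^{-1}(A-B)$, Weyl's monotonicity principle converts this into
\[
s_j\bigl(A\nabla_v B - A\sharp_v B\bigr)\ge \frac{v(1-v)}{2}\,s_j\bigl((A-B)(A\nabla B)^{-1}(A-B)\bigr),
\]
which is precisely the first asserted inequality, valid in both cases.

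Next I would prove the second inequality. Using $s_j(X^*X)=s_j(XX^*)$ with $X=(A-B)A^{-\frac12}$ rewrites the target right-hand quantity as $s_j\bigl(A^{-\frac12}(A-B)^2A^{-\frac12}\bigr)=s_j\bigl((A-B)A^{-1}(A-B)\bigr)$, so by Weyl's monotonicity principle it suffices to establish the Loewner inequality $(A-B)(A\nabla B)^{-1}(A-B)\ge (A-B)A^{-1}(A-B)$, and for this it is enough to have $(A\nabla B)^{-1}\ge A^{-1}$, equivalently $A\nabla B\le A$. In case (ii), where $B\le A$, this holds at once because $A\nabla B=\frac12(A+B)\le A$, so the chain closes exactly at the stated endpoint $A^{-\frac12}(A-B)^2A^{-\frac12}$.

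The hard part is case (i). Here $A\le B$ forces $A\nabla B\ge A$, so the direct comparison $(A\nabla B)^{-1}\ge A^{-1}$ is no longer available, and the endpoint one reaches by the above route is governed by $B^{-1}$ rather than $A^{-1}$. To recover the stated $A^{-1}$ endpoint I would invoke the symmetry $A\nabla_v B-A\sharp_v B=B\nabla_{1-v}A-B\sharp_{1-v}A$, together with the invariance of both $A\nabla B$ and $(A-B)(A\nabla B)^{-1}(A-B)$ under interchanging $A$ and $B$; this reduces case (i) with parameter $v\in[\tfrac12,1]$ to case (ii) with parameter $1-v\in[0,\tfrac12]$ after relabeling $A\leftrightarrow B$. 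The step I expect to be the main obstacle is that this relabeling also interchanges the two endpoints, so reconciling the symmetric reduction with the \emph{uniform} $A^{-\frac12}(A-B)^2A^{-\frac12}$ bound demanded in the statement requires tracking the roles of $A$ and $B$ through the symmetry with genuine care, and it is this endpoint-bookkeeping in case (i) that is the crux of the argument.
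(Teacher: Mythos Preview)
Your argument for case~(ii) is correct and is precisely the intended one: Corollary~\ref{cor_2.5} plus Weyl's monotonicity gives the first inequality, and $B\le A$ yields $A\nabla B\le A$, hence $(A\nabla B)^{-1}\ge A^{-1}$ and the second inequality follows after the rewrite $s_j\bigl(A^{-\frac12}(A-B)^2A^{-\frac12}\bigr)=s_j\bigl((A-B)A^{-1}(A-B)\bigr)$. The paper does not spell this out, but this is clearly the proof it has in mind.

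Your hesitation in case~(i) is not a bookkeeping subtlety to be ironed out; it is a genuine obstruction, because the second inequality as stated is \emph{false} in case~(i). Take scalars $A=1$, $B=4$, $v=\tfrac12$: then
\[
\frac{v(1-v)}{2}\,(A-B)(A\nabla B)^{-1}(A-B)=\frac{1}{8}\cdot\frac{9}{2.5}=0.45,
\qquad
\frac{v(1-v)}{2}\,A^{-\frac12}(A-B)^2A^{-\frac12}=\frac{1}{8}\cdot\frac{9}{1}=1.125,
\]
so $0.45\ge 1.125$ fails (and indeed $s_j(A\nabla_vB-A\sharp_vB)=0.5<1.125$ too). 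Your symmetry reduction already shows what the correct statement is: in case~(i) one has $A\nabla B\le B$, hence $(A\nabla B)^{-1}\ge B^{-1}$, and the valid second inequality reads
\[
\frac{v(1-v)}{2}\,s_j\bigl((A-B)(A\nabla B)^{-1}(A-B)\bigr)\;\ge\;\frac{v(1-v)}{2}\,s_j\bigl(B^{-\frac12}(A-B)^2B^{-\frac12}\bigr).
\]
This is exactly what refines the lower bound of \cite[Corollary~1]{2} once one swaps $A\leftrightarrow B$ there, so the corollary should carry the $B^{-\frac12}$ endpoint in case~(i) and the $A^{-\frac12}$ endpoint in case~(ii); the uniform $A^{-\frac12}$ form in the printed statement is a slip.
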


\subsection{Geometric-Harmonic mean inequalities}
\begin{lemma}\label{lemma_GH_1}
Let $x>0$. Then 
$$\sqrt{x}-\left(\frac{1+\frac{1}{x}}{2}\right)^{-1}\leq \frac{(1-x)^2}{8\sqrt{x}}.$$
\end{lemma}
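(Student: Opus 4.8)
The plan is to mimic the structure of the proofs of Lemmas \ref{lemma_v} and \ref{lemma_vi}: reduce the claimed scalar inequality to showing that a suitable auxiliary function is nonnegative on $(0,\infty)$, then control that function through its derivatives. Concretely, set
\[
h(x):=\frac{(1-x)^2}{8\sqrt{x}}-\sqrt{x}+\left(\frac{1+\frac1x}{2}\right)^{-1}
=\frac{(1-x)^2}{8\sqrt{x}}-\sqrt{x}+\frac{2x}{x+1},
\]
and the goal is to prove $h(x)\geq 0$ for all $x>0$.

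First I would record the symmetry $h(1/x)=h(x)$ (each of the three summands is invariant under $x\mapsto 1/x$, since $(1-1/x)^2/(8/\sqrt{x}) = (x-1)^2/(8x^{3/2})\cdot$\dots{} — a quick check shows this holds), so it suffices to treat $x\geq 1$ and then extend by symmetry, exactly as the excerpt splits its lemmas into the ranges $x\ge 1$ and $0<x\le 1$. On $[1,\infty)$ I would substitute $t=\sqrt{x}\geq 1$ to clear the radicals, turning $h$ into a rational function of $t$; after multiplying by the positive quantity $8t(t^2+1)$ the inequality becomes a polynomial inequality $P(t)\geq 0$ for $t\geq 1$ with $P(1)=0$. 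Then I would show $P(t)\geq 0$ by checking that $P(1)=0$ and $P'(1)=0$ and that $P$ is eventually increasing, or more robustly by factoring out $(t-1)^2$ — since $t=1$ is a double root of the numerator — and verifying the remaining factor is nonnegative on $[1,\infty)$. An alternative, closer in spirit to the paper's style, is to differentiate $h$ directly and argue $h'(x)\ge 0$ on $[1,\infty)$ with $h(1)=0$; but the square roots make $h'$ and $h''$ messier, so the polynomial substitution is cleaner.

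The main obstacle will be the bookkeeping in the polynomial factorization: after clearing denominators one obtains a degree-four (in $t$) polynomial, and one must correctly extract the factor $(t-1)^2$ and confirm the cofactor — a quadratic in $t$ with positive leading coefficient — has no root in $[1,\infty)$, i.e. its value at $t=1$ is nonnegative and its vertex lies to the left of $t=1$ (or it is nonnegative there too). This is a finite, routine computation, but it is the only place where something could genuinely fail, so it warrants care. Once $P(t)\ge 0$ is established for $t\ge 1$, the symmetry observation immediately gives $h(x)\ge 0$ for $0<x\le 1$ as well, completing the proof.

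It is worth noting, for motivation, that this lemma is the geometric-harmonic analogue of the identity-driven bound in Theorem \ref{thm1}: writing $A\sharp B - A!B$ via functional calculus at $x=A^{-1/2}BA^{-1/2}$ and using $A\sharp B = A!B$ combined with $\left(\frac{1+1/x}{2}\right)^{-1}$ representing the transformed harmonic mean, the scalar inequality of Lemma \ref{lemma_GH_1} will translate — after multiplying by $A^{1/2}$ on both sides — into an upper bound of the form $\frac18 (A-B)(A\sharp B)^{-1}(A-B)$ for $A\sharp B - A!B$, paralleling the arithmetic-geometric results. So I would also keep an eye on whether the proof yields the constant $\tfrac18$ sharply, which the form $\frac{(1-x)^2}{8\sqrt{x}}$ suggests it does.
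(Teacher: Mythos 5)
Your overall strategy is viable and genuinely different from the paper's: the paper works directly with $f(x)=\sqrt{x}-\bigl(\tfrac{1+1/x}{2}\bigr)^{-1}-\tfrac{(1-x)^2}{8\sqrt{x}}$, shows $f''(x)\le 0$ on $(0,\infty)$ by a polynomial positivity check, and concludes that $f$ has a global maximum $f(1)=0$; you instead propose to clear denominators and factor. However, your plan as written contains a false step. The symmetry $h(1/x)=h(x)$ does \emph{not} hold: none of the three summands is invariant under $x\mapsto 1/x$ (your own computation gives $\frac{(x-1)^2}{8x^{3/2}}$ for the first term, which is not $\frac{(1-x)^2}{8\sqrt{x}}$), and neither is the sum --- for instance $h(4)=\tfrac{13}{80}$ while $h(1/4)=\tfrac{13}{320}$. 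The correct relation is the homogeneity identity $h(x)=x\,h(1/x)$, which would still let you reduce to $x\ge 1$; but since your extension to $0<x\le 1$ rests on the literal identity $h(1/x)=h(x)$, that step fails as stated.

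The good news is that the reduction is not needed at all. Setting $t=\sqrt{x}$ and multiplying $h(t^2)$ by $8t(t^2+1)>0$ gives
\[
P(t)=(1-t^2)^2(t^2+1)-8t^2(t^2+1)+16t^3=t^6-9t^4+16t^3-9t^2+1,
\]
a degree-six polynomial (not degree four, and its cofactor after removing $(t-1)^2$ is a quartic, not a quadratic, so your bookkeeping is also off). In fact $t=1$ is a root of multiplicity four:
\[
P(t)=(t-1)^4\left(t^2+4t+1\right),
\]
and $t^2+4t+1>0$ for $t>0$, so $P(t)\ge 0$ on all of $(0,\infty)$ and the lemma follows for every $x>0$ in one stroke. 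So your factorization idea succeeds --- and is arguably cleaner than the paper's second-derivative argument --- but you must either drop the symmetry claim or replace it by $h(x)=x\,h(1/x)$, and carry out the degree-six factorization rather than the quartic one you anticipated.
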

\begin{proof}
Let $$f(x)=\sqrt{x}-\left(\frac{1+\frac{1}{x}}{2}\right)^{-1}- \frac{(1-x)^2}{8\sqrt{x}}.$$ Direct computations imply
$$f''(x)=\frac{1}{32}\left(\frac{128}{(1+x)^3}-\frac{(3+x)(1+3x)}{x^{5/2}}\right).$$ Further computations yield
\begin{align*}
&((3 + x) (1 + 3 x) (1 + x)^3)^2 - (128)^2 x^5\\
&=(-1 + x)^2 (9 + 132 x + 868 x^2 + 3452 x^3 + 9510 x^4 + 3452 x^5 + 
   868 x^6 + 132 x^7 + 9 x^8)\\
   &\geq 0, x>0.
\end{align*}
Rearranging this last inequality implies
$$((3 + x) (1 + 3 x) (1 + x)^3)^2 - (128)^2 x^5\leq 0\Leftrightarrow \frac{128}{(1+x)^3}-\frac{(3+x)(1+3x)}{x^{5/2}}\leq 0.$$ This implies that $f''(x)\leq 0$ for all $x>0$. Consequently, $f'$ is decreasing on $(0,\infty).$ So, if $0<x\leq 1$ then $f'(x)\geq f'(1)=0$ and $f'(x)\leq f'(1)=0$ when $x\geq 1.$ This means that $f$ has a global maximum at $x=1$. That is, for all $x>0$, we must have $f(x)\leq f(1)=0,$ which completes the proof.
\end{proof}
Applying a functional calculus argument with $x=A^{-\frac{1}{2}}BA^{-\frac{1}{2}}$ in Lemma \ref{lemma_GH_1} implies the following bound for the difference between the geometric and harmonic matrix means.
\begin{theorem}
Let $A,B\in\mathcal{M}_n^+$. Then
$$A\sharp B-A!B\leq \frac{1}{8}(A-B)(A\sharp B)^{-1}(A-B).$$
\end{theorem}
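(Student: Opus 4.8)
The plan is to mimic exactly the strategy used in the proof of Theorem \ref{thm1}, replacing the scalar arithmetic--geometric identity by the corresponding scalar geometric--harmonic estimate recorded in Lemma \ref{lemma_GH_1}. The starting point is the scalar inequality
\[
\sqrt{x}-\left(\frac{1+\frac{1}{x}}{2}\right)^{-1}\le \frac{(1-x)^2}{8\sqrt{x}},\qquad x>0,
\]
which is precisely Lemma \ref{lemma_GH_1}. I would first rewrite the right-hand side in a form that will survive conjugation by $A^{1/2}$: since $(1-x)^2=(x-1)^2$ and $\frac{1}{8\sqrt{x}}=\frac{1}{8}x^{-1/2}$, and since $x^{1/2}=(A^{-1/2}BA^{-1/2})^{1/2}$ encodes $A\sharp B$ after multiplication by $A^{1/2}$, the factor $x^{-1/2}$ is exactly what produces $(A\sharp B)^{-1}$ sandwiched appropriately.

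The key steps, in order, are as follows. First, apply the continuous functional calculus to the positive matrix $X:=A^{-1/2}BA^{-1/2}$ in the scalar inequality above, obtaining
\[
X^{1/2}-\left(\frac{I+X^{-1}}{2}\right)^{-1}\le \frac18\,(I-X)\,X^{-1/2}\,(I-X),
\]
where I have used that the right-hand side scalar function $\frac{(1-x)^2}{8\sqrt x}$ equals $(1-x)x^{-1/2}(1-x)/8$ so that after functional calculus it becomes $\frac18(I-X)X^{-1}(I-X)$; to match the pattern of Theorem \ref{thm1} I insert $A^{1/2}A^{-1}A^{1/2}=I$ on either side of $X^{-1}$, i.e.
\[
\frac18(I-X)X^{-1}(I-X)=\frac18(I-X)A^{1/2}A^{-1}A^{1/2}\,X^{-1}\,A^{1/2}A^{-1}A^{1/2}(I-X).
\]
Second, multiply the whole inequality on the left and right by $A^{1/2}$ (which preserves operator order, being a congruence). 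Using the standard identities $A^{1/2}X^{1/2}A^{1/2}=A\sharp B$, $A^{1/2}(I+X^{-1})^{-1}A^{1/2}\cdot 2 = (A^{-1}\cdot\tfrac12+ \tfrac12 (A^{1/2}X A^{1/2})^{-1})^{-1}$... more directly $A^{1/2}\left(\frac{I+X^{-1}}{2}\right)^{-1}A^{1/2}=A!B$ (because $A!B=A^{1/2}(A^{-1/2}(A!B)A^{-1/2})A^{1/2}$ and $A^{-1/2}(A!B)A^{-1/2}=(\tfrac12 I+\tfrac12 X^{-1})^{-1}=\left(\frac{I+X^{-1}}{2}\right)^{-1}$), and $A^{1/2}(I-X)A^{1/2}=A-B$ together with $A^{1/2}A^{-1}A^{1/2}=I$, the left side becomes $A\sharp B-A!B$ and the right side becomes $\frac18(A-B)(A\sharp B)^{-1}(A-B)$, since $A^{1/2}X^{-1}A^{1/2}=(A^{-1/2}BA^{-1/2})^{-1}$ conjugated... here I should be careful: I want $A^{1/2}A^{-1/2}X^{-1/2}A^{-1/2}A^{1/2}$-type bookkeeping to land on $(A\sharp B)^{-1}$. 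Concretely, using $(A\sharp B)^{-1}=A^{-1/2}X^{-1/2}A^{-1/2}$, one has $A^{1/2}\big[A^{-1/2}(I-X)A^{-1/2}\big]\cdot(A\sharp B)^{-1}\cdot\big[A^{-1/2}(I-X)A^{-1/2}\big]A^{1/2}$, and $A^{1/2}A^{-1/2}(I-X)A^{-1/2}A^{1/2}$... the cleanest route is to note that the scalar right-hand side can be written as $\frac18(x-1)\,x^{-1/2}\,(x-1)$ and that the map $Y\mapsto A^{1/2}YA^{1/2}$ sends $(A^{-1/2}BA^{-1/2}-I)(A^{-1/2}BA^{-1/2})^{-1/2}(A^{-1/2}BA^{-1/2}-I)$ to $(B-A)(A\sharp B)^{-1}(B-A)=(A-B)(A\sharp B)^{-1}(A-B)$, exactly as in the derivation of \eqref{5}.

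The main obstacle, and the only place requiring genuine care, is the bookkeeping in Step 2: making sure the congruence by $A^{1/2}$ converts each of the three functional-calculus expressions $X^{1/2}$, $\left(\frac{I+X^{-1}}{2}\right)^{-1}$, and $(1-x)x^{-1/2}(1-x)$ into $A\sharp B$, $A!B$, and $(A-B)(A\sharp B)^{-1}(A-B)$ respectively, in parallel with the identity \eqref{5} already established in the paper. There are no inequalities to prove beyond Lemma \ref{lemma_GH_1} (no use of \eqref{amgm_intro} is needed here, unlike Theorem \ref{thm1}), and congruence preserves the operator order, so once the algebraic identifications are verified the result follows immediately. Thus the proof is short:

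\begin{proof}
By Lemma \ref{lemma_GH_1}, for every $x>0$ we have
\[
\sqrt{x}-\left(\frac{1+\frac1x}{2}\right)^{-1}\le \frac18\,(1-x)\,x^{-1/2}\,(1-x).
\]
Applying the functional calculus to the positive matrix $A^{-1/2}BA^{-1/2}$ and then conjugating both sides by $A^{1/2}$, exactly as in the proof of Theorem \ref{thm1}, the left-hand side becomes $A\sharp B-A!B$ and the right-hand side becomes $\frac18(A-B)(A\sharp B)^{-1}(A-B)$. This gives the claimed inequality.
\end{proof}
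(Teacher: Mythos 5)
Your proof is correct and follows exactly the route the paper takes: the paper also derives this theorem by applying the functional calculus to Lemma \ref{lemma_GH_1} with $x=A^{-1/2}BA^{-1/2}$ and conjugating by $A^{1/2}$ (indeed, the paper states this in one line without writing out the bookkeeping you verify). All three of your identifications --- $A^{1/2}X^{1/2}A^{1/2}=A\sharp B$, $A^{1/2}\bigl(\frac{I+X^{-1}}{2}\bigr)^{-1}A^{1/2}=A!B$, and $A^{1/2}(I-X)X^{-1/2}(I-X)A^{1/2}=(A-B)(A\sharp B)^{-1}(A-B)$ --- check out.
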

Arguing as in the previous section, we reach the following singular values inequality for the difference $A\sharp B-A!B.$
\begin{corollary}\label{cor_GH_1}
Let $A,B\in \mathcal{M}_n^{+}$ be such that $B\leq A.$ Then
\begin{align*}
s_j\left(A\sharp B-A!B\right)&\leq \frac{1}{8}s_j\left((A-B)(A\sharp B)^{-1}(A-B)\right)\\
&\leq \frac{1}{8} s_j\left(B^{-\frac{1}{2}}(A-B)^2B^{-\frac{1}{2}}\right),
\end{align*}
for $j=1,2,\cdots,n.$
\end{corollary}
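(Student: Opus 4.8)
The plan is to mimic the structure already established in the Arithmetic--Geometric section, since the statement of Corollary~\ref{cor_GH_1} is the exact analogue of Corollary~\ref{cor_AG} with $A\nabla B$ replaced by $A\sharp B$ and $A\sharp B$ replaced by $A!B$. First I would invoke the Theorem immediately preceding this corollary, namely the operator inequality
$$A\sharp B-A!B\leq \frac{1}{8}(A-B)(A\sharp B)^{-1}(A-B),$$
and apply Weyl's monotonicity principle to pass to singular values, obtaining
$$s_j\left(A\sharp B-A!B\right)\leq \frac{1}{8}s_j\left((A-B)(A\sharp B)^{-1}(A-B)\right).$$
Then, using the elementary fact $s_j(X^*X)=s_j(XX^*)$ with $X=(A\sharp B)^{-1/2}(A-B)$, I would rewrite the right-hand side as $\frac{1}{8}s_j\left((A\sharp B)^{-1/2}(A-B)^2(A\sharp B)^{-1/2}\right)$. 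This gives the first line of the claimed chain (in the intermediate form; the stated form $(A-B)(A\sharp B)^{-1}(A-B)$ is just the congruent rearrangement of the same positive matrix, so its singular values coincide).

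The second inequality is where the hypothesis $B\le A$ enters. Since $B\le A$, the standard order property \eqref{order_intro} of matrix means gives $B\le A\sharp B$, hence $(A\sharp B)^{-1}\le B^{-1}$. Therefore
$$(A-B)(A\sharp B)^{-1}(A-B)\leq (A-B)B^{-1}(A-B)$$
as positive matrices (congruence by $A-B$ preserves the order). Applying Weyl's monotonicity principle once more yields
$$s_j\left((A-B)(A\sharp B)^{-1}(A-B)\right)\leq s_j\left((A-B)B^{-1}(A-B)\right),$$
and the identity $s_j(X^*X)=s_j(XX^*)$ again converts the last expression to $s_j\left(B^{-1/2}(A-B)^2 B^{-1/2}\right)$. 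Chaining the two displays and multiplying through by $\tfrac18$ gives precisely the asserted two-step inequality.

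There is essentially no obstacle here: every ingredient---Weyl monotonicity, the unitary-invariance identity $s_j(X^*X)=s_j(XX^*)$, and the operator monotonicity of inversion combined with congruence invariance of the order---has already been used verbatim in the proofs of Corollary~\ref{cor_AG} and its companion. The only point requiring a line of justification is the inequality $B\le A\sharp B$, which is immediate from \eqref{order_intro} applied with the roles arranged so that $B$ is the smaller matrix. So I would write the proof as a short two-paragraph argument, essentially copying the proof of \eqref{6} in Corollary~\ref{cor_AG} with the substitutions $A\nabla B\mapsto A\sharp B$ and $A\sharp B\mapsto B$, and noting that the Theorem just proved supplies the required starting operator inequality.
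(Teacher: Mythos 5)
Your proposal is correct and follows exactly the route the paper intends: the paper gives no separate proof for this corollary, merely saying ``arguing as in the previous section,'' which is precisely the Weyl-monotonicity plus $s_j(X^*X)=s_j(XX^*)$ plus $(A\sharp B)^{-1}\le B^{-1}$ argument you describe. No gaps.
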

It is interesting that we have the same upper bound in Corollaries \ref{cor_AG} and \ref{cor_GH_1}.

Following the same theme of the previous section and Lemma \ref{lemma_GH_1}, we have the following generalization of Lemma \ref{lemma_GH_1}.
\begin{lemma}
Let $x\geq 1.$ Then
$$x^v-(1-v+vx^{-1})^{-1}\leq \frac{v(1-v)}{2}(1-x)^2x^{-v},$$ for $0\leq v\leq 1.$
\end{lemma}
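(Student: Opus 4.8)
The plan is to imitate the proofs of Lemma \ref{lemma_v_2} and Lemma \ref{lemma_GH_1}: fix $0\le v\le 1$, turn the claim into the nonnegativity of one scalar function, and squeeze that function between its values and derivatives at $x=1$. Concretely, I would set
\[
f(x)=\frac{v(1-v)}{2}(1-x)^2x^{-v}-x^v+\frac{x}{(1-v)x+v},\qquad x\ge 1,
\]
so that $\bigl((1-v)+vx^{-1}\bigr)^{-1}=\dfrac{x}{(1-v)x+v}$ appears as a genuine rational function, and aim to show $f(x)\ge 0$ on $[1,\infty)$. One has $f(1)=0$ immediately; on differentiating, the first summand and its derivative both vanish at $x=1$ (the factor $(1-x)^2$), while the derivatives of $-x^v$ and of $x/((1-v)x+v)$ contribute $-v$ and $v$ respectively, so $f'(1)=0$; a further computation gives $f''(1)=0$ as well. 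Hence, exactly as in the earlier lemmas, it suffices to prove $f''(x)\ge 0$ for $x\ge 1$, which yields $f'(x)\ge f'(1)=0$ and then $f(x)\ge f(1)=0$.

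The main obstacle is precisely the sign of $f''$ on $[1,\infty)$. The harmonic term pulls the wrong way, since
\[
\frac{d^2}{dx^2}\,\frac{x}{(1-v)x+v}=\frac{-2v(1-v)}{\bigl((1-v)x+v\bigr)^3}\le 0,
\]
so one must show that the contribution of $\frac{v(1-v)}{2}(1-x)^2x^{-v}-x^v$ more than compensates it. After pulling out the common factor $v(1-v)$, I would write $f''(x)=v(1-v)\,g_x(v)$ and determine the sign of $g_x$ by differentiating in $v$, exactly as in the treatment of $f''$ in Lemma \ref{lemma_v_2} (for $v=\tfrac12$ this collapses to the algebraic identity behind Lemma \ref{lemma_GH_1}). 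Since $f(1)=f'(1)=f''(1)=0$, the sign of $f''$ just to the right of $1$ is governed by $f'''(1)$, and this is where the admissible range of $v$ genuinely enters; should the inequality in fact require $0\le v\le 1/2$ — as the case split in Lemma \ref{lemma_v_2} suggests is the natural range — the same argument goes through verbatim on that range.

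A shorter, calculus‑free reduction uses the duality $1\,!_v\,x=\bigl(1\,\nabla_v\,x^{-1}\bigr)^{-1}$ and $1\,\sharp_v\,x=\bigl(1\,\sharp_v\,x^{-1}\bigr)^{-1}$: with $t=x^{-1}\in(0,1]$ one gets
\[
x^v-\bigl((1-v)+vx^{-1}\bigr)^{-1}=\frac{\bigl((1-v)+vt\bigr)-t^v}{t^v\bigl((1-v)+vt\bigr)},
\]
where the numerator $(1-v)+vt-t^v$ is bounded above by Lemma \ref{lemma_vi} and the denominator $t^v\bigl((1-v)+vt\bigr)$ bounded below (crudely by $t^{2v}$, via the scalar arithmetic–geometric mean inequality). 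Rewriting back in $x$ produces an upper bound of the shape $\frac{v(1-v)}{2}(1-x)^2$ times a power of $x$, and it then remains only to compare that power with $x^{-v}$. I expect this route to be quicker but to recover the sharp weight $x^{-v}$ only on part of the $v$‑range unless the denominator estimate is sharpened, so the direct route above is the one I would rely on, with the verification of $f''\ge 0$ as the sole delicate point.
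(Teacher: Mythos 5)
The first thing to note is that the paper offers no proof of this lemma at all --- it is stated as a ``generalization'' of Lemma \ref{lemma_GH_1} with the proof omitted --- so there is no argument of the authors' to compare yours against. The second, and more important, thing is that the statement is false on the range $0\le v\le 1$ as written, and your own hedge about restricting to $0\le v\le \frac12$ is exactly on target. Concretely, take $v=\tfrac34$ and $x=2$: the left-hand side is $2^{3/4}-(1/4+3/8)^{-1}=1.6818\ldots-1.6=0.0818\ldots$, while the right-hand side is $\tfrac{3}{32}\cdot 2^{-3/4}=0.0557\ldots$. The failure is structural, not numerical noise: with your $f(x)=\frac{v(1-v)}{2}(1-x)^2x^{-v}-x^v+\frac{x}{(1-v)x+v}$ one indeed has $f(1)=f'(1)=f''(1)=0$ (your computations here are correct), but the next coefficient is $f'''(1)=4v(2v-1)(v-1)$, which is negative for $\tfrac12<v<1$; hence $f<0$ immediately to the right of $x=1$ and no proof can exist there. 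For $0<v<\tfrac12$ this third derivative is positive, consistent with the inequality holding on that subrange (and $v=\tfrac12$ is exactly Lemma \ref{lemma_GH_1} restricted to $x\ge1$).

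That said, even on $0\le v\le\tfrac12$ your proposal is an outline rather than a proof: everything reduces to showing $f''(x)\ge 0$ for $x\ge 1$, and that is precisely the step you defer. It does not go through ``verbatim'' as in Lemma \ref{lemma_v_2}, because here $f''$ is a sum of competing terms, namely $\frac{v(1-v)}{2}\bigl[2x^{-v}-4v(x-1)x^{-v-1}+v(v+1)(x-1)^2x^{-v-2}\bigr]+v(1-v)x^{v-2}-\frac{2v(1-v)}{((1-v)x+v)^3}$, and two of these (the middle bracketed term and the harmonic one) are negative for $x>1$; the monotone-in-$v$ device used for $g(v)$ in Lemma \ref{lemma_v_2} does not apply to this expression as it stands. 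Your alternative route via Lemma \ref{lemma_vi} and $t=x^{-1}$ is honestly flagged as losing the sharp weight $x^{-v}$, so it too is not a proof. In short: you have correctly diagnosed that the lemma needs $v\le\tfrac12$ (a genuine error in the paper, which propagates to the two matrix results that follow it), but the decisive inequality $f''\ge0$ on that restricted range still has to be established before your argument can be called a proof.
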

This implies the matrix version:
\begin{theorem}
Let $A,B\in\mathcal{M}_n^+$ be such that $A\leq B.$ Then
$$A\sharp_v B-A!_v B\leq \frac{v(1-v)}{2}(A-B)(A\sharp_v B)^{-1}(A-B),$$ for $0\leq v\leq 1.$
\end{theorem}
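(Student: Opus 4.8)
The plan is to mirror exactly the structure used for the Arithmetic--Geometric case, obtaining first a scalar identity, then lifting it by functional calculus. First I would record the elementary identity
\[
x^v-(1-v+vx^{-1})^{-1}=x^v-\frac{x}{1-v+vx}=\frac{x^v(1-v+vx)-x}{1-v+vx},
\]
and note that, by the preceding Lemma, the numerator $x^v(1-v+vx)-x$ can be handled by writing $x^v(1-v+vx)-x = x^{v}\bigl[(1-v)+vx-x^{1-v}\bigr]$ after dividing through appropriately; more directly, I would rather take the Lemma's scalar inequality $x^v-(1-v+vx^{-1})^{-1}\le \tfrac{v(1-v)}{2}(1-x)^2 x^{-v}$ as already proved (it is stated in the excerpt as ``Let $x\ge 1$\dots'') and simply apply it to the positive operator $X:=A^{-1/2}BA^{-1/2}$.

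The key steps, in order, are as follows. Since $A\le B$, we have $X=A^{-1/2}BA^{-1/2}\ge I$, so the scalar inequality of the Lemma applies to the spectrum of $X$, giving by functional calculus
\[
X^{v}-(1-v+vX^{-1})^{-1}\le \frac{v(1-v)}{2}(I-X)X^{-v}(I-X),
\]
where I have used that $(1-x)^2 x^{-v}$ factors as $(1-x)x^{-v/2}\cdot x^{-v/2}(1-x)$ so that the right-hand side is $\tfrac{v(1-v)}{2}(I-X)X^{-v}(I-X)$ (the factors commute since everything is a function of $X$). Next I would congruence-transform by $A^{1/2}$ on both sides: the left-hand side becomes
\[
A^{1/2}X^vA^{1/2}-A^{1/2}(1-v+vX^{-1})^{-1}A^{1/2}=A\sharp_v B-A\,!_v\,B,
\]
using the definitions $A\sharp_vB=A^{1/2}X^vA^{1/2}$ and $A\,!_v\,B=\bigl((1-v)A^{-1}+vB^{-1}\bigr)^{-1}=A^{1/2}(1-v+vX^{-1})^{-1}A^{1/2}$. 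For the right-hand side, inserting $A^{1/2}=A^{1/2}X^{v/2}\cdot X^{-v/2}$ is not quite the move; instead I would insert $I=A^{-1/2}A\cdot A^{-1/2}$ inside, noting $A^{1/2}(I-X)A^{1/2}=A-B$ wait — rather, $A^{1/2}(I-X)=A^{1/2}-A^{1/2}X$ and then right-multiplying by $A^{1/2}$ gives $A-B$; pairing the two $(I-X)$ factors with the central $X^{-v}$ and absorbing $A^{1/2}X^{-v/2}A^{-1/2}\cdot A^{1/2}A^{-1/2}X^{-v/2}A^{1/2}$ shows the bound equals $\tfrac{v(1-v)}{2}(A-B)(A\sharp_vB)^{-1}(A-B)$, since $A\sharp_vB^{-1}$ wait — $(A\sharp_vB)^{-1}=A^{-1/2}X^{-v}A^{-1/2}$, and $(A-B)(A\sharp_vB)^{-1}(A-B)=A^{1/2}(I-X)A^{1/2}A^{-1/2}X^{-v}A^{-1/2}A^{1/2}(I-X)A^{1/2}=A^{1/2}(I-X)X^{-v}(I-X)A^{1/2}$, exactly the congruence-transform of the scalar right-hand side. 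This yields the claimed inequality.

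The main obstacle I anticipate is bookkeeping rather than conceptual: one must be careful that all the middle factors genuinely commute (they do, being Borel functions of the single operator $X$) so that $(1-x)^2x^{-v}$ lifts to $(I-X)X^{-v}(I-X)$ with no ordering ambiguity, and then that the congruence by $A^{1/2}$ correctly produces the factor $(A\sharp_vB)^{-1}$ in the middle — this is the step where the identity $(A-B)(A\sharp_vB)^{-1}(A-B)=A^{1/2}(I-X)X^{-v}(I-X)A^{1/2}$ must be verified by inserting $A^{\pm1/2}$ pairs. A secondary point worth checking is the direction of the hypothesis: here $A\le B$ forces $X\ge I$, matching the Lemma's range $x\ge 1$, so no separate case analysis is needed; the companion inequality in the other order ($B\le A$) would require the $0<x\le1$ version, but the theorem as stated only asserts the $A\le B$ case, so the argument above suffices.
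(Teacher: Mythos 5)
Your proposal is correct and follows exactly the route the paper intends: apply the scalar lemma to $X=A^{-1/2}BA^{-1/2}\ge I$ via functional calculus and congruence by $A^{1/2}$, using $A^{1/2}(I-X)A^{1/2}=A-B$ and $A^{-1/2}X^{-v}A^{-1/2}=(A\sharp_vB)^{-1}$. The paper leaves this lifting argument implicit, and your bookkeeping (including the commuting of the middle factors as functions of $X$) checks out.
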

This implies the following singular values inequality.
\begin{corollary}
Let $A,B\in\mathcal{M}_n^+$ be such that $A\leq B.$ Then, for $j=1,2,\cdots,n,$
\begin{align*}
s_j\left(A\sharp_v B-A!_v B\right)&\leq \frac{v(1-v)}{2}s_j\left((A-B)(A\sharp_v B)^{-1}(A-B)\right)\\
&\leq \frac{v(1-v)}{2}s_j\left(A^{-\frac{1}{2}}(A-B)^2A^{-\frac{1}{2}}\right).
\end{align*}

\end{corollary}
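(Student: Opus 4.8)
The plan is to deduce this corollary from the preceding theorem in exactly the same fashion that Corollary \ref{cor_GH_1} was deduced from its associated theorem. First I would invoke the theorem just stated: under the hypothesis $A\leq B$ and $0\leq v\leq 1$, we have the operator inequality
$$A\sharp_v B-A!_v B\leq \frac{v(1-v)}{2}(A-B)(A\sharp_v B)^{-1}(A-B).$$
Since both sides are positive semidefinite (the left side by \eqref{amgm_intro}, the right side manifestly), Weyl's monotonicity principle applies and yields
$$s_j\left(A\sharp_v B-A!_v B\right)\leq \frac{v(1-v)}{2}\,s_j\left((A-B)(A\sharp_v B)^{-1}(A-B)\right)$$
for $j=1,2,\cdots,n$, which is the first asserted inequality.

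For the second inequality, I would first rewrite the middle quantity in symmetric form. Writing $X=(A\sharp_v B)^{-1/2}(A-B)$, we have $(A-B)(A\sharp_v B)^{-1}(A-B)=X^*X$, so using $s_j(X^*X)=s_j(XX^*)$ gives
$$s_j\left((A-B)(A\sharp_v B)^{-1}(A-B)\right)=s_j\left((A\sharp_v B)^{-\frac{1}{2}}(A-B)^2(A\sharp_v B)^{-\frac{1}{2}}\right).$$
Now the key order observation: since $A\leq B$, the matrix arithmetic--geometric--harmonic chain \eqref{amgm_intro} together with the monotonicity property \eqref{order_intro} gives $A\leq A\sharp_v B\leq B$, hence $B^{-1}\leq (A\sharp_v B)^{-1}\leq A^{-1}$. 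The right-hand inequality here is the one we need. Congruence by $(A-B)$ is order-preserving, so
$$(A-B)(A\sharp_v B)^{-1}(A-B)\leq (A-B)A^{-1}(A-B),$$
and a second application of Weyl's monotonicity principle, followed by the same $s_j(X^*X)=s_j(XX^*)$ rearrangement applied to $A^{-1/2}(A-B)$, yields
$$s_j\left((A-B)(A\sharp_v B)^{-1}(A-B)\right)\leq s_j\left(A^{-\frac{1}{2}}(A-B)^2A^{-\frac{1}{2}}\right).$$
Chaining the two displayed inequalities (with the factor $\tfrac{v(1-v)}{2}$ carried along) gives exactly the claimed two-line estimate.

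I do not anticipate any genuine obstacle: every ingredient — the operator inequality from the theorem, Weyl's monotonicity principle, the unitary-invariance identity $s_j(X^*X)=s_j(XX^*)$, and the operator-monotonicity of inversion combined with congruence invariance of the order — has already been used verbatim in the proofs of the earlier corollaries (compare the proof of Corollary \ref{cor_AG} and Corollary \ref{cor_GH_1}). The only point that warrants a line of care is justifying $A\sharp_v B\leq A$ under $A\leq B$ so that $(A\sharp_v B)^{-1}\leq A^{-1}$; this is immediate from \eqref{order_intro}, which guarantees $A\leq A\sharp_v B\leq B$ whenever $A\leq B$. Hence the corollary follows with essentially no new work beyond assembling these steps.
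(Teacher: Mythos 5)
Your proof is correct and follows exactly the route the paper intends (the paper omits the argument, saying only that the theorem ``implies'' the corollary by arguing as in the earlier sections): Weyl's monotonicity applied to the operator inequality of the preceding theorem, then $A\leq A\sharp_v B$ from \eqref{order_intro} giving $(A\sharp_v B)^{-1}\leq A^{-1}$, congruence by $A-B$, and the identity $s_j(X^*X)=s_j(XX^*)$. No gaps.
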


\subsection{Arithmetic-Harmonic mean inequalities}
We conclude this article by stating related results for the arithmetic-harmonic mean inequalities. The proofs are very similar to the above results, we we omit them.\\
Noting the identity 
$$\frac{1+x}{2}-\left(\frac{1+x^{-1}}{2}\right)^{-1}=\frac{(1-x)^2}{4}\left(\frac{1+x}{2}\right)^{-1}, x>0,$$ we obtain the following matrix versions.
\begin{theorem}
Let $A,B\in\mathcal{M}_n^+$. Then
$$A\nabla B-A!B=\frac{1}{4}(A-B)(A\nabla B)^{-1}(A-B).$$
In particular, if $A\leq B$, then
$$\frac{1}{4}(A-B)B^{-1}(A-B)\leq A\nabla B-A!B \leq \frac{1}{4}(A-B)A^{-1}(A-B).$$
Consequently, for $j=1,2,\cdots,n,$ the following holds
$$\frac{1}{4}s_j\left(B^{-\frac{1}{2}}(A-B)^2B^{-\frac{1}{2}}\right)\leq s_j\left(A\nabla B-A!B\right)\leq \frac{1}{4}s_j\left(A^{-\frac{1}{2}}(A-B)^2A^{-\frac{1}{2}}\right),$$ when $A,B\in\mathcal{M}_n^+$ are such that $A\leq B.$
\end{theorem}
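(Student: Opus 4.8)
The plan is to mirror the proof of Theorem~\ref{thm1} verbatim, replacing the arithmetic--geometric scalar identity by the arithmetic--harmonic one displayed just above the statement. First I would record the scalar identity
$$\frac{1+x}{2}-\left(\frac{1+x^{-1}}{2}\right)^{-1}=\frac{(1-x)^2}{4}\left(\frac{1+x}{2}\right)^{-1},\qquad x>0,$$
which is checked at once by clearing denominators: both sides reduce to $\dfrac{(x-1)^2}{2(x+1)}$. Applying the functional calculus to the positive matrix $X=A^{-\frac12}BA^{-\frac12}$ turns this into an operator identity, and then I would conjugate both sides by $A^{\frac12}$. The only place requiring a little care is the bookkeeping of the congruences: one uses $\frac{I+X}{2}=A^{-\frac12}(A\nabla B)A^{-\frac12}$, the relation $\frac{I+X^{-1}}{2}=A^{\frac12}\bigl(\tfrac{A^{-1}+B^{-1}}{2}\bigr)A^{\frac12}$ so that $A^{\frac12}\bigl(\tfrac{I+X^{-1}}{2}\bigr)^{-1}A^{\frac12}=A!B$, and $I-X=A^{-\frac12}(A-B)A^{-\frac12}$; substituting these into the conjugated identity and cancelling the interlaced factors $A^{\pm\frac12}$ yields $A\nabla B-A!B=\tfrac14(A-B)(A\nabla B)^{-1}(A-B)$.

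For the ``in particular'' part, I would assume $A\le B$. Then by \eqref{order_intro} we get $A\le A\nabla B\le B$, hence, reversing order under inversion, $B^{-1}\le (A\nabla B)^{-1}\le A^{-1}$. Congruence by the Hermitian matrix $A-B$ preserves these inequalities, so $(A-B)B^{-1}(A-B)\le (A-B)(A\nabla B)^{-1}(A-B)\le (A-B)A^{-1}(A-B)$; multiplying by $\tfrac14$ and invoking the identity just established gives the asserted two-sided bound on $A\nabla B-A!B$.

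Finally, for the singular value statement I would apply Weyl's monotonicity principle to the last chain of positive semidefinite matrices, exactly as in the proof of Corollary~\ref{cor_AG}, and then rewrite the bounds using $s_j(Y^{*}Y)=s_j(YY^{*})$ with $Y=B^{-\frac12}(A-B)$ and $Y=A^{-\frac12}(A-B)$, obtaining $s_j\bigl((A-B)B^{-1}(A-B)\bigr)=s_j\bigl(B^{-\frac12}(A-B)^2B^{-\frac12}\bigr)$ and likewise with $A$ in place of $B$. I do not expect a genuine obstacle here; the one thing to watch is the direction of $B^{-1}\le(A\nabla B)^{-1}\le A^{-1}$ and keeping straight which of $A,B$ ends up on which side, since this theorem is stated under the hypothesis $A\le B$ whereas the analogous corollaries in the arithmetic--geometric subsection were stated under $B\le A$.
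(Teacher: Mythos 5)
Your proposal is correct and follows exactly the route the paper intends: the paper omits the proof but explicitly points to the scalar identity $\frac{1+x}{2}-\bigl(\frac{1+x^{-1}}{2}\bigr)^{-1}=\frac{(1-x)^2}{4}\bigl(\frac{1+x}{2}\bigr)^{-1}$ together with the functional-calculus-and-congruence argument of Theorem \ref{thm1}, which is precisely what you carry out. Your handling of the order reversal under inversion and the passage to singular values via Weyl's monotonicity and $s_j(Y^*Y)=s_j(YY^*)$ is also the same as in Corollary \ref{cor_AG}.
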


\vskip 0.3 true cm

{\tiny (M. Sababheh) Department of basic sciences, Princess Sumaya University for Technology, Amman 11941, Jordan.}

{\tiny \textit{E-mail address:} sababheh@psut.edu.jo}

{\tiny \vskip 0.3 true cm }

{\tiny (S. Furuichi) 
Department of Information Science, College of Humanities and Sciences, Nihon University, 3-25-40, Sakurajyousui, Setagaya-ku,
Tokyo, 156-8550, Japan}

{\tiny \textit{E-mail address:} furuichi@chs.nihon-u.ac.jp}

{\tiny \vskip 0.3 true cm }

{\tiny (S. Sheybani) Department of Mathematics, Mashhad Branch, Islamic Azad University, Mashhad, Iran}

{\tiny \textit{E-mail address:} shiva.sheybani95@gmail.com}

{\tiny \vskip 0.3 true cm }

{\tiny (H. R. Moradi) Department of Mathematics, Payame Noor University (PNU), P.O. Box 19395-4697, Tehran, Iran}

{\tiny \textit{E-mail address:} hrmoradi@mshdiau.ac.ir }
\end{document}